\documentclass[a4paper,12pt]{amsart}
\usepackage{amsmath,amssymb,amsthm}
\usepackage[left=2.5cm,right=2.5cm]{geometry}
\usepackage{enumerate}
\usepackage{mathtools}
\usepackage[hidelinks]{hyperref}
\usepackage{verbatim}
\usepackage{hyperref,xcolor}
\hypersetup{colorlinks=true, citecolor=red, linkcolor=green}
\makeatletter
\@namedef{subjclassname@2020}{%
	\textup{2020} Mathematics Subject Classification}
\makeatother

\newtheorem{thm}{Theorem}[section]

\newtheorem{prop}[thm]{Proposition}
\newtheorem{lem}[thm]{Lemma}

\theoremstyle{definition}
\newtheorem{defn}[thm]{Definition}

\newtheorem{rem}[thm]{Remark}

\def\D{\mathbb D}

\begin{document}
	
	\title[critical cyclicity in Dirichlet-type spaces on the bidisk]
	{critical cyclicity in Dirichlet-type spaces on the bidisk}

    \author[Beslikas]{Athanasios Beslikas}
\address{A. Beslikas \\Doctoral School of Exact and Natural Studies,
Institute of Mathematics,
Faculty of Mathematics and Computer Science,
Jagiellonian University,
\L{}ojasiewicza 6, PL30348, Cracow, Poland}
\email{athanasios.beslikas@doctoral.uj.edu.pl}
	
	\author[Torkinejad Ziarati]{Pouriya Torkinejad Ziarati}
	
	\address{P.T.Ziarati\\
		Institut de Math\'ematiques de Toulouse; UMR5219 \\
		Universit\'e de Toulouse; CNRS \\
		UPS, F-31062 Toulouse Cedex 9, France} 
        \email{pouriya.torkinejad\_ziarati@math.univ-toulouse.fr}

	\begin{abstract} Consider the Dirichlet-type spaces on the bidisc defined by 
    $$\mathcal{D}_{\beta}(\mathbb{D}^2)=\Bigg\{f(z_1,z_2)=\sum_{k,l}a_{kl}z_1^kz_2^l\in\mathcal{O}(\mathbb{D}^2): \sum_{k,l\ge 0}|a_{kl}|^2(k+l+1)^{\beta}<+\infty\Bigg\}.$$
    Given $\beta_c\in(0,2],$ we construct a function $f$ that belongs to the Dirichlet-type space $\mathcal D_{2}(\mathbb{D}^2)$ of the bidisk and is cyclic in $\mathcal D_\beta(\mathbb{D}^2)$ if and only if $\beta\leq \beta_{c}.$ We also show that the critical index satisfies $\beta_c=2-\mathrm{dim}_H(\mathcal{Z}(f)\cap \mathbb{T}^2),$ where $\mathrm{dim_H}(\mathcal{Z}(f)\cap \mathbb{T}^2)$ is the Hausdorff dimension of the zero set of the function $f$ on the two-torus $\mathbb{T}^2.$   
              
        \end{abstract}

	
	\subjclass[2020]{32F45}
	
	\keywords{Dirichlet-type Spaces, Weighted Bergman Spaces, Cyclic Vectors, Shift Operator}
	
	\maketitle
\section{Introduction and main result}  
\subsection{Cyclicity in Banach spaces}
\noindent Let $\mathcal A$ be a Banach space of holomorphic functions on a
    domain $\Omega\subset\mathbb C^d$. The multiplier space of
    $\mathcal A$, denoted by $\mathcal M(\mathcal A)$, consists of the
    functions $\varphi$ such that $\varphi g\in\mathcal A$ for every
    $g\in\mathcal A$. Assume that polynomials are multipliers and are
    dense in $\mathcal A$. For $f\in\mathcal A$, let
    \[
        [f]
        :=
        \overline{\{pf:p\in\mathbb C[z_1,\ldots,z_d]\}}^{\,\mathcal A}.
    \]
    We say that $f$ is cyclic in $\mathcal A$ if $[f]=\mathcal A$.
    Equivalently, $f$ is cyclic if there exists a sequence of polynomials
    $(p_n)$ such that
    \[
        p_nf\longrightarrow1
        \qquad\text{in }\mathcal A.
    \]

    The following two elementary observations will be crucial in the sequel. First, if
    $\mathcal A_2\subset\mathcal A_1$ continuously and $f$ is cyclic in
    $\mathcal A_2$, then $f$ is cyclic in $\mathcal A_1$. Indeed, if
    $p_nf\to1$ in $\mathcal A_2$, then the same convergence holds in
    $\mathcal A_1$. Thus, in a nested family of spaces, cyclicity propagates
    to larger spaces, whereas noncyclicity propagates to smaller spaces.
    Second, if $z_0$ is a bounded point evaluation for $\mathcal A$ and
    $f(z_0)=0$, then $f$ is not cyclic in $\mathcal A$.
\subsection{Overview of the unit disc}
    Characterizing cyclic vectors with respect to the shift operators on a Banach or Hilbert space of holomorphic functions is a classical problem in complex analysis and operator theory. Such problems have traditionally been challenging even in classical Hilbert
spaces of holomorphic functions on the unit disk $\mathbb{D}.$
Complete characterizations of cyclic vectors with respect to an operator are rare. An exception is the classical theorem of Beurling, which states that a function lying in the Hardy space \(H^2(\mathbb D)\) is cyclic
if and only if it is outer; see \cite{Beurling1949} for details. 

Let us now briefly describe the motivation for our work by mentioning related work on cyclicity in Dirichlet-type spaces of the unit disc.

For \(0\leq\alpha\leq1\), let
\[
    D_\alpha(\mathbb D)
    :=
    \left\{
        f(w)=\sum_{n\geq0}a_nw^n:
        \sum_{n\geq0}|a_n|^2(n+1)^\alpha<\infty
    \right\}.
\]
In particular, \(D_0(\mathbb D)=H^2(\mathbb D)\), while
\(D_1(\mathbb D)\) is the classical Dirichlet space. For
\(0<\alpha\leq1\), every function in \(D_\alpha(\mathbb D)\) admits
radial limits outside a set of \(\alpha\)-capacity zero; see,
for example, \cite{ElFallah2014_book}. Moreover, let $\mathcal{Z}(f)$ denote the zero set of a holomorphic function $f.$ Then, if $f\in D_{\beta}(\mathbb{D})$, the set
\(\mathcal{Z}(f)\cap\mathbb T\) is understood in terms of these boundary values.

In the classical Dirichlet space, Brown and Shields proved that if
\(f\) is cyclic, then \(f\) is outer and \(\mathcal{Z}(f)\cap\mathbb T\) has
logarithmic capacity zero. They conjectured that the converse also
holds \cite{BS84}. El-Fallah, Kellay, and Ransford proved this converse
when \(f\) extends continuously to \(\overline{\mathbb D}\) and
\(\mathcal{Z}(f)\cap\mathbb T\) is a generalized Cantor set
\cite{ElFallah_BS_Conj}. They subsequently extended this result to
weighted Dirichlet spaces, replacing logarithmic capacity by the Riesz 
\(\alpha\)-capacity \cite{ELFALLAH2010_Cantor}. For some more recent advancements on the topic, the reader is invited to also study the paper \cite{Vavitsas3}.
\subsection{Overview of the unit ball and bidisk}
In several complex variables, many works have considered the problem of cyclicity in Dirichlet-type spaces on the bidisk and the unit ball. Most of them focus on cyclicity polynomials. The reader is referred, among others, to the recent papers of Kosi\'nski-Vavitsas on cyclic polynomials in Dirichlet-type spaces on the unit ball, Hartz-Chalmoukis on cyclicity in the Drury-Arveson space on the unit ball, and Knese-Kosi\'nski-Ransford-Sola on cyclic polynomials in Dirichlet-type spaces on the bidisc, and the more recent work of the second named author on cyclicity of multipliers \cite{Kosinski_Vavitsas2023,ChalmoukisHartz2024,Knese2019_Aniso_Bidisk,Vavitsas2, Pouriya_SecondPaper}.

We denote the unit bidisk in $\mathbb{C}^2$ by
$$\mathbb{D}^2:=\{z=(z_1,z_2):|z_1|<1,|z_2|<1\}.$$
Our study focuses on the cyclicity of more general functions in Dirichlet-type spaces defined on this domain; specifically, we study when a function is \textit{critically cyclic}, a notion we shall explain in detail in the next subsection. 

 One should be precise with regard to the choice of the appropriate Dirichlet-type spaces on the bidisk. There are two different families of Dirichlet-type spaces that have been
considered in the literature. The more classical spaces, denoted by
\(\mathfrak D_\beta(\mathbb D^2)\) and introduced by Kaptano\u glu in \cite{Kaptanoglu1994}, are
invariant under composition with the automorphisms of \(\mathbb D^2\). More precisely, if $f\in \mathcal{O}(\mathbb{D}^2,\mathbb{C}),$ i.e., a holomorphic function on the bidisc with complex values, then it is known that this function has a Taylor expansion of the form
\[
    f(z_1,z_2)
    =
    \sum_{k,l\geq0}a_{kl}z_1^kz_2^l.
\]
The Dirichlet-type spaces introduced by Kaptano\u glu are defined as
\[ \mathfrak{D}_{\beta}(\mathbb{D}^2)=\Bigg\{f\in\mathcal{O}(\mathbb{D}^2,\mathbb{C}):
    \|f\|_{\mathfrak D_{\beta}}^2
    :=
    \sum_{k,l\geq0}
    |a_{kl}|^2(k+1)^\beta(l+1)^\beta<+\infty\Bigg\}.
\]
In \cite{Pouriya_FirstPaper, BesliSola,PoletskyStessin2008}, among other works, the authors introduced and studied a different class of Dirichlet-type spaces, denoted by \(\mathcal D_\beta(\mathbb D^2)\), and defined by a less punishing coefficient norm
\[
\mathcal D_{\beta}(\mathbb{D}^2)=\Bigg\{f\in\mathcal{O}(\mathbb{D}^2,\mathbb{C}):
    \|f\|_{\mathcal D_{\beta}}^2
    :=
    \sum_{k,l\geq0}
    |a_{kl}|^2(k+l+1)^\beta<+\infty\Bigg\}.
\]
In his paper \cite{Pouriya_FirstPaper}, the second named author defines these Dirichlet-type spaces in detail through the exhaustion function of the bidisk, providing another natural and effective definition of Dirichlet-type spaces on this domain.

For the range of parameters $\beta\in (0,2),$ both families of spaces possess some rather nice equivalent integral norms. More precisely, for the Dirichlet-type spaces defined by Kaptano\u glu, one has

\[\mathfrak{D}_{\beta}(\mathbb{D}^2):=\Biggl\{ f\in \mathcal{O}(\mathbb{D}^2):\int_{\mathbb{D}^2}\left|\frac{\partial^2(z_1z_2f(z_1,z_2))}{\partial z_1 \partial z_2}\right|^2dA_{\beta}(z_1)dA_{\beta}(z_2)<\infty\Biggr\},\]
where $dA_{\beta}(z)=(1-|z|^2)^{1-\beta}dA(z),$ and $dA(z)=\frac{1}{\pi}dxdy,$ for $z=x+iy.$
For the Poletsky-Stessin Dirichlet-type spaces, one also has the following equivalent integral norm
\begin{equation} \label{eq: Integral Norm Defn}
\|f\|_{\mathcal D_\beta(\mathbb D^2)}^2
=
|f(0)|^2
+
\int_0^1
\int_{\mathbb T^2}
|Rf(\rho\zeta)|^2
\,dm_2(\zeta)\,
(1-\rho)^{1-\beta}\,d\rho ,
\end{equation}
where $dm_2=\frac{1}{(2\pi)^2}d\theta_1d\theta_2$
Here, by $Rf$ we denote the \textit{radial derivative} of a holomorphic function $f\in\mathcal{O}(\mathbb{D}^2,\mathbb{C})$, which is defined as follows:
    \begin{equation*}
        R f (z) = z_1 \frac{\partial f}{\partial z_1} (z)+z_2 \frac{\partial f}{\partial z_2} (z).
    \end{equation*}
The cyclic polynomials in \(\mathfrak D_\beta(\mathbb D^2)\) were
completely characterized in
\cite{Beneteau2016_TwoVariables,Knese2019_Aniso_Bidisk}. In these
spaces, cyclicity is not determined only by the dimension of the
boundary zero set. Indeed, for \(1/2<\beta\leq1\), the polynomials
\(1-z_1\) and \(1-z_2\) are cyclic, although their zero sets on
\(\mathbb T^2\) have dimension one, whereas a polynomial depending on
both variables with a one-dimensional boundary zero set need not be
cyclic.

\subsection{Critical cyclicity and main result}
For \(\mathcal D_\beta(\mathbb D^2)\), the results of
\cite{Pouriya_FirstPaper,NailwalZalar2026} provide a complete
characterization. More precisely, let \(p\) be an irreducible polynomial that does not
vanish in \(\mathbb D^2\). If \(\beta\leq1\), then \(p\) is cyclic in
\(\mathcal D_\beta(\mathbb D^2)\). If \(1<\beta\leq2\), then \(p\) is cyclic if
and only if \(\mathcal{Z}(p)\cap\mathbb T^2\) is empty or finite. Finally, if
\(\beta>2\), then \(p\) is cyclic if and only if
\(\mathcal{Z}(p)\cap\mathbb T^2=\varnothing\). Thus, for \(\mathcal D_\beta(\mathbb D^2)\),
the polynomial results suggest that cyclicity depends heavily on the
dimension of the boundary zero set.

Let \(E\subset\mathbb T^2\) be a closed set and assume further that there
exists a \(\beta_c\in(0,2]\) such that
$
    \operatorname{cap}_\beta(E)=0
    \Longleftrightarrow
    \beta\leq\beta_c.$
We call \(f\in \mathcal D_2(\mathbb D^2)\) \textit{critically cyclic} for \(E\) if
$
    \mathcal{Z}(f)\cap\mathbb T^2=E
$
and \(f\) is cyclic in \(\mathcal D_\beta(\mathbb D^2)\) if and only if
\(\beta\leq\beta_c\). The number \(\beta_c\) is called the \textit{critical
index} of \(f\). The examples constructed in
\cite{Pouriya_CriticalBall} suggest that, on the unit ball, the
critical index is determined by the Kor\'anyi--Hausdorff dimension of
the boundary zero set. Our main result is the following.
   
    \begin{thm} \label{MainThm1}
        Given $\beta_c \in (0,2]$, there exists $f \in \mathcal D_2(\D^2)$ such that $f$ is cyclic in $\mathcal D_\beta(\D^2)$ if and only if $\beta \leq \beta_c$.
    \end{thm}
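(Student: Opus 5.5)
The plan is to produce $f$ by lifting one-variable examples with prescribed critical behaviour. From the theory on the disc (El-Fallah--Kellay--Ransford \cite{ELFALLAH2010_Cantor}) we take the following input. For each $s\in[0,1)$ there is a closed set $K_s\subset\mathbb T$ and a function $h_s$ such that:
\begin{itemize}
\item $K_s$ is a generalized Cantor set with $\dim_H K_s=s$, or the single point $\{1\}$ when $s=0$;
\item $h_s$ is outer, smooth up to $\overline{\mathbb D}$, and lies in $D_2(\mathbb D)$;
\item $\mathcal Z(h_s)\cap\mathbb T=K_s$, and $h_s$ is cyclic in $D_\alpha(\mathbb D)$ if and only if $\alpha\le 1-s$.
\end{itemize}
For $s=0$ one may simply take $h_0(w)=(1-w)^2$. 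For a Cantor set, we take $|h_s|$ comparable to a high power of $\mathrm{dist}(\cdot,K_s)$ on $\mathbb T$, which gives the $D_2$ smoothness. For $\alpha>1$ noncyclicity is automatic, since point evaluations on $\mathbb T$ are then bounded. We split into two regimes: $\beta_c\in(0,1]$ and $\beta_c\in(1,2]$.

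\emph{Case $\beta_c\in(0,1]$.} Put $s=1-\beta_c$ and $f(z_1,z_2)=h_s(z_1)$. Then $\mathcal Z(f)\cap\mathbb T^2=K_s\times\mathbb T$, which has dimension $1+s=2-\beta_c$. For functions of $z_1$ alone, the $\mathcal D_\beta(\mathbb D^2)$ norm equals the $D_\beta(\mathbb D)$ norm.
\begin{itemize}
\item \emph{Cyclicity for $\beta\le\beta_c$.} One-variable polynomials $p_n(z_1)$ with $p_nh_s\to1$ in $D_\beta(\mathbb D)$ also give $p_nf\to1$ in $\mathcal D_\beta(\mathbb D^2)$.
\item \emph{Noncyclicity for $\beta>\beta_c$.} Use the coefficient projection $P$ onto monomials $z_1^k$, i.e.\ $(Pg)(z_1)=g(z_1,0)$. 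It is contractive from $\mathcal D_\beta(\mathbb D^2)$ to $D_\beta(\mathbb D)$, and it satisfies $P(pf)=p(\cdot,0)h_s$. Hence $p_nf\to1$ in $\mathcal D_\beta(\mathbb D^2)$ would force $h_s$ to be cyclic in $D_\beta(\mathbb D)$, a contradiction.
\end{itemize}
Finally, $f\in\mathcal D_2(\mathbb D^2)$ because $h_s\in D_2(\mathbb D)$.

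\emph{Case $\beta_c\in(1,2]$.} Put $s=2-\beta_c\in[0,1)$ and $f(z)=h_s(u)$ with $u=(z_1+z_2)/2$. Since $|u|=1$ on $\overline{\mathbb D}^2$ forces $z_1=z_2$, we get $\mathcal Z(f)\cap\mathbb T^2=\{(\zeta,\zeta):\zeta\in K_s\}$, of dimension $s=2-\beta_c$. To check $f\in\mathcal D_2(\mathbb D^2)$, expand $u^n$. This gives $\|u^n\|^2_{\mathcal D_\beta}=(n+1)^\beta\binom{2n}{n}4^{-n}\asymp(n+1)^{\beta-1/2}$, so $g(u)\in\mathcal D_\beta(\mathbb D^2)$ as soon as $g\in D_{\beta-1/2}(\mathbb D)$, which covers $f$ (or we raise the vanishing order of $h_s$).
\begin{itemize}
\item \emph{Noncyclicity for $\beta>\beta_c$.} Use the diagonal restriction $Tg(w)=g(w,w)$, whose coefficients are $c_n=\sum_{k+l=n}a_{kl}$. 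By Cauchy--Schwarz, $|c_n|^2\le(n+1)\sum_{k+l=n}|a_{kl}|^2$, so $T$ is bounded from $\mathcal D_\beta(\mathbb D^2)$ to $D_{\beta-1}(\mathbb D)$. Moreover $T(pf)=p(w,w)h_s(w)$, so cyclicity of $f$ would make $h_s$ cyclic in $D_{\beta-1}(\mathbb D)$. This fails exactly when $\beta-1>1-s$, i.e.\ $\beta>\beta_c$. For $\beta>2$ the conclusion is also immediate from the boundary zero.
\end{itemize}

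\emph{Main obstacle: cyclicity for $1<\beta\le\beta_c$.} Polynomials in $u$ alone only give cyclicity up to $\beta\le 3/2-s$, because of the $(n+1)^{\beta-1/2}$ scaling. So genuinely two-variable approximants are needed, exploiting the extra decay transversal to the diagonal.

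I would work with the integral norm \eqref{eq: Integral Norm Defn}. The aim is to show $1\in[f]$ via a Brown--Shields/El-Fallah--Kellay--Ransford type scheme. The scheme proceeds as follows.
\begin{enumerate}
\item Choose functions $\varphi_n$, built from $\mathrm{dist}(z,\{(\zeta,\zeta):\zeta\in K_s\})$ at scales $2^{-n}$, with $\varphi_n f\in[f]$ and $\varphi_n\to1$ pointwise.
\item Bound $\int_0^1\!\int_{\mathbb T^2}|R(\varphi_nf)-R f\,\cdot\,\text{(limit)}|^2(1-\rho)^{1-\beta}$ uniformly in $n$.
\item The Riesz-capacity computation on the diagonal Cantor set, now in a codimension-two (rather than codimension-one) neighbourhood, is what should give the threshold $2-s$ instead of $3/2-s$.
\item Conclude by weak compactness in the Hilbert space $\mathcal D_\beta(\mathbb D^2)$.
\end{enumerate}
Making this capacity estimate work uniformly near $\beta=\beta_c$ is where I expect the real difficulty.

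If this route stalls, a fallback for the existence statement alone is to accept the weaker threshold $3/2-s$, which gives every $\beta_c\in(1/2,3/2]$. One would then treat $\beta_c\in(3/2,2]$ by the same restriction/projection device applied to a different lift, but this would lose the identity $\beta_c=2-\dim_H$.
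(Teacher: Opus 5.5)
\textbf{Verdict.} There is a genuine gap: the cyclicity of your Case~2 function for $3/2-s<\beta\le 2-s$ is never proved. Everything else you wrote is correct.

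\textbf{What works.} Case~1 and the noncyclicity half of Case~2 are valid and simpler than the paper's route. Since $\|h(z_1)\|_{\mathcal D_\beta(\mathbb D^2)}=\|h\|_{D_\beta(\mathbb D)}$, and $P\colon g\mapsto g(\cdot,0)$ is a contraction with $P(pf)=p(\cdot,0)h_s$, the function $h_s(z_1)$ inherits exactly the one-variable critical index of El-Fallah--Kellay--Ransford. The paper instead uses a complex-tangentially thickened diagonal set, a push-forward measure and radial dilation. Likewise, your diagonal restriction $T\colon\mathcal D_\beta(\mathbb D^2)\to D_{\beta-1}(\mathbb D)$ replaces the paper's Frostman/kernel-estimate capacity argument.

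\textbf{The gap.} Cyclicity of $f=h_s(u)$ in $\mathcal D_\beta(\mathbb D^2)$ for $3/2-s<\beta\le 2-s$ is the core of the theorem when $\beta_c\in(1,2]$, and your four-step scheme supplies no mechanism for it:
\begin{enumerate}
\item The $\varphi_n$ would have to be holomorphic multipliers for $\varphi_nf\in[f]$ to make sense, yet they are described only as ``built from a distance function''.
\item Step~2 does not state a usable estimate.
\item Step~3 simply asserts the threshold $2-s$ that has to be proved.
\end{enumerate}
The fallback also fails. For $h_s(u)$ you only have noncyclicity for $\beta>2-s$, so cyclicity up to $3/2-s$ does not make $3/2-s$ a critical index. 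Hence no ``if and only if'' statement results for any $\beta_c\in(1,2]$. In fact, for $E=\{(\zeta,\zeta):\zeta\in K_s\}$ one has $d_K(z,E)=\operatorname{dist}(u,K_s)$. So the paper's argument shows the index of $h_s(u)$ is $2-s$, provided $|h_s|\asymp\operatorname{dist}(\cdot,K_s)^N$ on all of $\overline{\mathbb D}$. That is more than the lower bound on $\mathbb T$ you assumed.

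\textbf{The missing idea.} Replace the midpoint lift $g\mapsto g(u)$ by the averaging lift
\[
\mathcal Tu(z)=\int_0^1u(tz_1+(1-t)z_2)\,dt
\]
of Lemma~\ref{lem:diagonal-lifting}. It is an isometry $D_{\beta-1}(\mathbb D)\to\mathcal D_\beta(\mathbb D^2)$, it is precisely the adjoint of your $T$ (with $T\mathcal T=I$), and it realizes the $(n+1)^{\beta-1}$ scaling you were looking for. However, $\mathcal T(ph_s)$ is not of the form $qf$, so one cannot just lift one-variable approximants. The paper therefore uses an analytic family $g_\lambda=\mathcal T(h_s^{\lambda})$, with $\lambda$ in a sector $\Omega$:
\begin{enumerate}
\item As $\lambda\to0$, $h_s^\lambda\to1$ in $D_{1-s}(\mathbb D)$, hence $g_\lambda\to1$ in $\mathcal D_{\beta_c}(\mathbb D^2)$.
\item For large $\lambda$, both $g_\lambda$ and $f$ are multipliers and $g_\lambda/f\in H^\infty(\mathbb D^2)$. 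This step needs the interior lower bound on $|h_s|$. Lemma~\ref{Perfekt} then gives $g_\lambda^2\in[f]$.
\item Let $Q$ be the quotient map onto $\mathcal D_{\beta_c}(\mathbb D^2)/[f]$. The identity theorem applied to the holomorphic map $\lambda\mapsto Q(g_\lambda^2)$ extends $g_\lambda^2\in[f]$ to all of $\Omega$.
\item Letting $\lambda\to0$, where $g_\lambda^2\to1$, gives $1\in[f]$.
\end{enumerate}
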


The critical index in the bidisk case satisfies
$$\beta_c=2-\mathrm{dim}_H(\mathcal{Z}(f)\cap \mathbb{T}^2),$$ where $\mathrm{dim}_H$ denotes the Hausdorff dimension of the zero set of the function $f$ intersected with the two-torus.

The noncyclicity assertions in both cases follow from capacity
arguments. For \(0<\beta_c\leq1\), cyclicity is proved by the radial
dilation method. Specifically we modify ideas of the second named author from his work on critically critically cyclic functions on the unit ball, see \cite{Pouriya_CriticalBall}. Unfortunately, when \(1<\beta_c\leq2\), this method does not apply. Indeed, it does not even settle the cyclicity of the polynomial
\(2-z_1-z_2\) for \(3/2<\beta\leq2\). This question was posed in
\cite[Open Problem~1]{Pouriya_FirstPaper} and subsequently answered
affirmatively by Nailwal and Zalar \cite{NailwalZalar2026}. In the
second case, we instead lift a one-variable analytic deformation to
the bidisk. This method also gives an alternative proof of the
cyclicity of \(2-z_1-z_2\). One also has to note here that our result does not directly translate to the $\mathfrak{D}_{\beta}-$spaces defined by Kaptano\u glu. From Proposition 4.5. of \cite{BesliSola}, we know that $\mathcal{D}_{\beta}=\mathfrak{D}_{(0,\beta)}\cap\mathfrak{D}_{(\beta,0)}.$ This fact alone does not imply that our result holds for $\mathfrak{D_{\beta}}-$spaces. 

\section{Preliminaries} \label{Section2}
\subsection{Basics and notation} \label{Subsec2.1}

    	For $z=(z_1,\dots,z_n)$ and $w=(w_1,\dots,w_n)$ in
$\mathbb C^n$, we denote by
\[
    \langle z,w\rangle
    :=\sum_{j=1}^n z_j\overline{w_j}
\]
the standard Hermitian inner product on $\mathbb C^n$. We write
$
    |z|=\sqrt{\langle z,z\rangle}
$
for the associated Euclidean norm. We also follow the standard notations in the literature for the unit disc, circle and ball respectively,
    $$\mathbb D:=\{z\in\mathbb C:|z|<1\},
    \qquad
    \mathbb T:=\partial\mathbb D,
    \qquad
    \mathbb B_n:=\{z\in\mathbb C^n:|z|<1\}.$$
The following two remarks are rather simple observations, but at the same time, they play crucial roles in our proofs.
\begin{rem}\label{rem:radial-derivative}
    Let $f \in \mathcal{O}(\D^2)$ and
    $f(z)= \sum_{k,l \geq 0} a_{kl} z_1^k z_2^l$. Then
    $Rf(z) = \sum_{k,l \geq0} (k+l) a_{kl} z_1^k z_2^l$. We have the following equivalent norm relation,
    \begin{equation}
        \|f\|_\beta
        \asymp |f(0)|+\|Rf\|_{\beta-2}.
        \label{eq:radial-norm-equivalence}
    \end{equation}
    Hence, $f \in \mathcal D_\beta(\D^2)$ if and only if
    $Rf \in \mathcal D_{\beta-2}(\D^2)$.
\end{rem}

\begin{rem}\label{rem:monotonicity}
Let $\beta_1\leq\beta_2$. Then $D_{\beta_2}(\mathbb D^2)\subset D_{\beta_1}(\mathbb D^2)$ continuously. Consequently, if $f$ is cyclic in $D_{\beta_2}(\mathbb D^2)$, then it is cyclic in $D_{\beta_1}(\mathbb D^2)$.
\end{rem}

\subsection{Notational conventions}
    We write $A \lesssim B$ to indicate that there exists a constant $C > 0$ such that $A \leq C B$. If both $A \lesssim B$ and $B \lesssim A$ hold, we write $A \asymp B$. If the constant depends on a parameter $\alpha,$ then we write $A \lesssim_{\alpha}B.$ Similarly, we write $\asymp_{\alpha}.$ Moreover, assume that $\Omega\subset\mathbb C^n$ is a bounded domain. For $k\in\mathbb N_0\cup\{\infty\}$, we define
\[
    A^k(\Omega):=\mathcal O(\Omega)\cap \mathcal{C}^k(\overline\Omega),
\]
where $\mathcal{C}^k$ denotes the class of functions with continuous partial derivatives of order $k$.
Finally, for a finite Borel measure $\sigma$ on $\mathbb T$, we write
\[
    \widehat\sigma(j):=\int_{\mathbb T}\zeta^j\,d\sigma(\zeta),
    \qquad j\in\mathbb Z.
\]

\subsection{Tools for the proofs}

In the present section we briefly present the basic theorems, lemmas, and propositions that will be applied in the proofs of the main results. When necessary, we provide their proofs. The first is a practical sufficient condition for the cyclicity of a function. For a detailed proof, one may consult \cite{Knese2019_Aniso_Bidisk}.

\begin{thm} \label{Radial_Dilation}
    Let $f \in \mathcal D_{\beta}(\D^2)$ and assume that $f$ does not vanish on $\D^2$. For $0 < r < 1$, define the radial dilation
    $
        f_r (z) = f(rz).
    $
    If
    \begin{equation*}
        \sup_{0 < r < 1} \left\| \frac{f}{f_r} \right\|_{\beta} < +\infty,
    \end{equation*}
    then $f$ is cyclic in $\mathcal D_{\beta}(\D^2)$.
\end{thm}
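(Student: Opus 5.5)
The plan is the standard two-step argument. First, show that each quotient $f/f_r$ lies in the invariant subspace $[f]$. Second, let $r\to1$ and use the uniform bound together with a weak compactness argument in the Hilbert space $\mathcal D_\beta(\D^2)$ to conclude that $1\in[f]$.

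\textbf{Step 1: $f/f_r\in[f]$ for each fixed $0<r<1$.} The dilation $f_r(z)=f(rz)$ is holomorphic on the larger polydisc $r^{-1}\D^2$. Since $f$ has no zeros in $\D^2$, the function $f_r$ has no zeros on a neighbourhood of $\overline{\D^2}$, namely on $(1+\varepsilon)\D^2$ with $1+\varepsilon<r^{-1}$. Hence $g:=1/f_r$ is holomorphic there. Writing $g=\sum b_{kl}z_1^kz_2^l$, Cauchy's estimates give
\[
|b_{kl}|\lesssim (1+\varepsilon')^{-(k+l)}
\]
for some $\varepsilon'>0$.

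Next, estimate the multiplier norm of a monomial on $\mathcal D_\beta(\D^2)$ directly from the coefficient norm. Multiplying by $z_1^kz_2^l$ shifts the index $(m,n)$ to $(m+k,n+l)$. Since
\[
\frac{(m+n+k+l+1)^{\beta}}{(m+n+1)^{\beta}}\le (k+l+1)^{|\beta|},
\]
we obtain
\[
\|z_1^kz_2^l\|_{\mathcal M(\mathcal D_\beta)}\le (k+l+1)^{|\beta|/2}.
\]
Because the coefficients decay geometrically, the Taylor partial sums $p_N$ of $g$ converge to $g$ in multiplier norm. Consequently $p_Nf\to gf=f/f_r$ in $\mathcal D_\beta(\D^2)$, so $f/f_r\in[f]$.

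\textbf{Step 2: passing to the limit $r\to1$.} By hypothesis the family $\{f/f_r\}_{0<r<1}$ is bounded in the Hilbert space $\mathcal D_\beta(\D^2)$. Pick $r_j\to1$. After passing to a subsequence, $f/f_{r_j}\to h$ weakly for some $h\in\mathcal D_\beta(\D^2)$.

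Point evaluations at points of $\D^2$ are bounded on $\mathcal D_\beta(\D^2)$, since $|a_{kl}|$ is controlled by the norm and $\sum (k+l+1)^{-\beta}|z_1|^{2k}|z_2|^{2l}<\infty$ for $z\in\D^2$. Weak convergence therefore implies pointwise convergence. On the other hand, $f_r\to f$ locally uniformly and $f$ has no zeros, so $f/f_{r_j}\to1$ pointwise on $\D^2$. Hence $h\equiv1$.

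The subspace $[f]$ is closed and convex, hence weakly closed. It contains every $f/f_{r_j}$, so it contains $1$. Since $1\in[f]$ and $[f]$ is invariant under multiplication by polynomials, $[f]$ contains all polynomials. Polynomials are dense in $\mathcal D_\beta(\D^2)$, so $[f]=\mathcal D_\beta(\D^2)$ and $f$ is cyclic.

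\textbf{Main obstacle.} The only delicate point is Step 1: justifying that $1/f_r$ acts as a multiplier that can be approximated by polynomials in multiplier norm. Once this is settled via the monomial multiplier bound, which grows only polynomially in $k+l$ and is beaten by the geometric decay of the Taylor coefficients, the rest is the routine weak-compactness argument.
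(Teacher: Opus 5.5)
Your proof is correct and follows the standard argument. The paper gives no proof of its own here; it refers to Knese--Kosi\'nski--Ransford--Sola, and the argument there is the same: $1/f_r$ is holomorphic on a neighbourhood of $\overline{\mathbb D^2}$, hence a multiplier that polynomials approximate, so $f/f_r\in[f]$; the uniform bound, weak compactness and the pointwise limit $f/f_r\to 1$ then give $1\in[f]$. Your monomial bound $(k+l+1)^{|\beta|/2}$, set against the geometric decay of the Taylor coefficients of $1/f_r$, is a clean way to justify the first step for every real $\beta$.
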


For the proof of the main result, we shall need the reproducing kernel of the $\mathcal D_{\beta}(\mathbb{D}^2)$ spaces.

\begin{prop} \label{prop: kernel}
Let $\beta \in (0,2)$. Then $\mathcal D_\beta(\mathbb{D}^2)$ is an RKHS (reproducing kernel Hilbert space), i.e., it can be endowed with an inner product for which the reproducing kernel $k_\beta$ is given by
\begin{equation*}
    k_{\beta}(z,w) = \int_0^1 \frac{dt}{\left(1 - t z_1 \overline{w}_1 - (1-t) z_2 \overline{w}_2 \right)^{2-\beta}}, 
    \quad z,w \in \mathbb{D}^2.
\end{equation*}
\end{prop}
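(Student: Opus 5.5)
The statement to prove is the one immediately above, Proposition~\ref{prop: kernel}: for $\beta\in(0,2)$, $\mathcal D_\beta(\mathbb D^2)$ carries an equivalent inner product whose reproducing kernel is $k_\beta$. The plan is to expand $k_\beta$ in a power series, read off the weights of a diagonal coefficient inner product, and check that these weights are comparable to $(k+l+1)^\beta$.

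\textbf{Step 1 (expansion).} For fixed $z,w\in\mathbb D^2$ and $t\in[0,1]$ put $u=tz_1\overline w_1+(1-t)z_2\overline w_2$. Then $|u|\le \max(|z_1\overline w_1|,|z_2\overline w_2|)<1$ uniformly in $t$. Since $2-\beta>0$, the binomial series gives
\[
(1-u)^{-(2-\beta)}=\sum_{n\ge0}c_n u^n,
\qquad
c_n=\frac{\Gamma(n+2-\beta)}{\Gamma(2-\beta)\,n!}>0 .
\]
Expanding $u^n$ by the binomial theorem and integrating term by term (the convergence is uniform in $t$) uses the Beta integral
\[
\int_0^1 t^k(1-t)^l\,dt=\frac{k!\,l!}{(k+l+1)!}.
\]
The result is
\[
k_\beta(z,w)=\sum_{k,l\ge0}\omega_{kl}^{-1}(z_1\overline w_1)^k(z_2\overline w_2)^l,
\qquad
\omega_{kl}^{-1}=c_{k+l}\binom{k+l}{k}\frac{k!\,l!}{(k+l+1)!}=\frac{c_{k+l}}{k+l+1}.
\]

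\textbf{Step 2 (the inner product).} Define
\[
\langle f,g\rangle:=\sum_{k,l\ge0}\omega_{kl}\,a_{kl}\overline{b_{kl}},
\qquad
\omega_{kl}=\frac{k+l+1}{c_{k+l}} .
\]
The monomials $z_1^kz_2^l$ are then orthogonal with $\|z_1^kz_2^l\|^2=\omega_{kl}$. For a diagonal coefficient norm of this type, the reproducing kernel is $\sum_{k,l}\omega_{kl}^{-1}z^{(k,l)}\overline{w}^{(k,l)}$. This is exactly the expansion in Step~1, so the kernel of $\langle\cdot,\cdot\rangle$ is $k_\beta$. The reproducing property itself is checked directly: $k_\beta(\cdot,w)$ has square-summable weighted coefficients, because
\[
\sum_{k,l}\omega_{kl}^{-1}|w_1|^{2k}|w_2|^{2l}=k_\beta(w,w)<\infty,
\]
and pairing it with $f$ returns $\sum_{k,l} a_{kl}w_1^kw_2^l=f(w)$.

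\textbf{Step 3 (equivalence with the $\mathcal D_\beta$ norm).} Stirling's formula, or the ratio $\Gamma(n+a)/\Gamma(n+b)\sim n^{a-b}$, gives $c_n\asymp_\beta (n+1)^{1-\beta}$. Hence
\[
\omega_{kl}\asymp_\beta (k+l+1)^{\beta},
\]
so the new norm is equivalent to $\|\cdot\|_{\mathcal D_\beta}$ and defines the same space of functions. Completeness and continuity of point evaluations follow from this equivalence; in particular the space is an RKHS.

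The only step that needs any care is the Beta-integral bookkeeping in Step~1, where the factor $\binom{k+l}{k}$ cancels against $k!\,l!$; everything else is routine. The hypothesis $\beta<2$ is used precisely to make $c_n>0$ with the stated growth. At $\beta=2$ the exponent $2-\beta$ vanishes and the kernel degenerates, so it would have to be replaced by a logarithmic kernel.
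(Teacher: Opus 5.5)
Your proposal is correct and is essentially the paper's argument run in the opposite direction. The paper starts from the Stirling-equivalent weights $\frac{\Gamma(m+1)\Gamma(2-\beta)}{\Gamma(2-\beta+m)}(m+1)$, which are exactly your $\omega_{kl}$ with $m=k+l$, and sums the resulting kernel series. It does this using the same identity $\int_0^1(tx_1+(1-t)x_2)^m\,dt=\frac{1}{m+1}\sum_{j=0}^m x_1^jx_2^{m-j}$ followed by the binomial series, whereas you expand the kernel first, read off the weights, and then check the equivalence. Your explicit check that $k_\beta(\cdot,w)$ belongs to the space with norm squared $k_\beta(w,w)$ is a small detail the paper leaves implicit.
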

\begin{proof}
    A straightforward application of Stirling's formula yields
\begin{equation*}
    (m+1)^{\beta} \asymp \frac{\Gamma(m+1)\,\Gamma(2-\beta)}{\Gamma(2-\beta + m)} \, (m+1).
\end{equation*}
As a consequence, for $f(z) = \sum_{k,l \ge 0} a_{kl} z_1^k z_2^l \in \mathcal D_\beta(\mathbb{D}^2)$, we have
\begin{equation} \label{eq:equivalent_norm}
    \| f \|_\beta^2 \asymp \sum_{k,l \ge 0} 
    \frac{\Gamma(k+l+1)\,\Gamma(2-\beta)}{\Gamma(2-\beta + k + l)} \, (k+l+1) |a_{kl}|^2.
\end{equation}
Using the right-hand side of \eqref{eq:equivalent_norm} as the definition of the norm, we compute $k_\beta$ as follows:
\begin{equation*}
\begin{aligned}
k_{\beta}(z,w)
&= \sum_{k,l \ge 0} 
\frac{\Gamma(2-\beta + k + l)}{\Gamma(k+l+1)\,\Gamma(2-\beta)\,(k+l+1)} 
(z_1 \overline{w}_1)^{k} (z_2 \overline{w}_2)^{l} \\
&= \sum_{m=0}^{\infty} 
\frac{\Gamma(2-\beta + m)}{\Gamma(m+1)\,\Gamma(2-\beta)\,(m+1)} 
\sum_{j=0}^{m} (z_1 \overline{w}_1)^j (z_2 \overline{w}_2)^{m-j} \\
&= \sum_{m=0}^{\infty} 
\frac{\Gamma(2-\beta + m)}{\Gamma(m+1)\,\Gamma(2-\beta)\,(m+1)} 
\frac{(z_1 \overline{w}_1)^{m+1} - (z_2 \overline{w}_2)^{m+1}}
     {z_1 \overline{w}_1 - z_2 \overline{w}_2}.
\end{aligned}
\end{equation*}
Using the identity
\[
\frac{x_1^{m+1} - x_2^{m+1}}{x_1 - x_2}
= (m+1) \int_0^1 (t x_1 + (1-t) x_2)^m \, dt,
\]
together with Fubini's theorem, we obtain
\begin{equation*}
\begin{aligned}
k_{\beta}(z,w)
&= \int_0^1 \sum_{m=0}^{\infty} 
\frac{\Gamma(2-\beta + m)}{\Gamma(m+1)\,\Gamma(2-\beta)} 
\left( t z_1 \overline{w}_1 + (1-t) z_2 \overline{w}_2 \right)^m \, dt \\
&= \int_0^1 
\frac{dt}{\left(1 - t z_1 \overline{w}_1 - (1-t) z_2 \overline{w}_2 \right)^{2-\beta}}.
\end{aligned}
\end{equation*}
\end{proof}
\begin{rem}\label{rem:kernel-beta-two}
    In a similar fashion, one may equip $\mathcal D_2(\D^2)$ with an inner product such that
    \begin{equation*}
        k_{2}(z,w) = \int_0^1 \log\left(\frac{e}{1 - t z_1 \overline{w}_1 - (1-t) z_2 \overline{w}_2 }\right)  dt, 
    \quad z,w \in \mathbb{D}^2.
    \end{equation*}
\end{rem}

\begin{lem}\label{lem:segment-integral-estimates}
Let $x,y\in\mathbb C$ and set $M:=\max\{|x|,|y|\}>0.$
For $0<\alpha<1$,
\begin{equation}\label{eq:affine-power-estimate}
    \int_0^1|(1-t)x+t y|^{-\alpha}\,dt
    \lesssim_\alpha M^{-\alpha}.
\end{equation}
Moreover, if $M\leq1$, then
\begin{equation}\label{eq:affine-log-estimate}
    \int_0^1\log\frac{e}{|(1-t)x+ty|}\,dt
    \lesssim\log\frac{e}{M}.
\end{equation}
\end{lem}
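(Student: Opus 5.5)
We prove the two estimates separately. In each case we reduce the integral over the segment $t\mapsto(1-t)x+ty$ to an integral against arc length of the distance to a single point on a line.

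\emph{Reduction.} By symmetry (replace $t$ by $1-t$) we may assume $M=|x|\ge|y|$. Write $\gamma(t)=(1-t)x+ty$. Then $\gamma$ is affine in $t$ with derivative $y-x$, and $|y-x|\le 2M$.

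\emph{Two regions in $t$.} For $t\in[0,1/4]$ the triangle inequality gives
\[
|\gamma(t)|\ge (1-t)|x|-t|y|\ge (1-2t)M\ge M/2 .
\]
On this range the integrands are bounded by $2^\alpha M^{-\alpha}$ and by $\log(2e/M)\lesssim\log(e/M)$, using $M\le1$ in the logarithmic case.

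For $t\in[1/4,1]$ we use the elementary one-dimensional fact that for an affine map $\gamma(t)=a+tb$ with $b\ne0$,
\[
|\gamma(t)|\ge |b|\,|t-t_0|,
\]
where $t_0$ is the real parameter of the orthogonal projection of $0$ onto the line. This holds because $|\gamma(t)|^2=|\gamma(t_0)|^2+|b|^2(t-t_0)^2$.

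\emph{Case split on $|y-x|$.} If $|y-x|\le M/2$, then $|\gamma(t)|\ge |x|-t|y-x|\ge M/2$ on all of $[0,1]$, and both bounds are immediate.

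Otherwise $|b|=|y-x|\in(M/2,2M]$, and then
\[
\int_{1/4}^1|\gamma|^{-\alpha}\,dt\le (M/2)^{-\alpha}\int_0^1|t-t_0|^{-\alpha}\,dt\lesssim_\alpha M^{-\alpha},
\]
since $\int_0^1|t-t_0|^{-\alpha}\,dt\le 2\int_0^1 s^{-\alpha}\,ds=2/(1-\alpha)$ uniformly in $t_0\in\mathbb R$. This proves \eqref{eq:affine-power-estimate}.

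\emph{The logarithmic bound.} Similarly,
\[
\log\frac{e}{|\gamma(t)|}\le \log\frac{2e}{M}+\log\frac{1}{|t-t_0|}.
\]
Here $\int_0^1\log^+(1/|t-t_0|)\,dt\le 2\int_0^1\log(1/s)\,ds=2$, again uniformly in $t_0$. Where $|t-t_0|>1$ the second term is negative, so it can be discarded. Since $M\le1$ we have $\log(e/M)\ge1$, so the additive constants are absorbed and $\log(2e/M)\lesssim\log(e/M)$. This proves \eqref{eq:affine-log-estimate}.

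The only point requiring care is that all constants are uniform in the position of $t_0$ (possibly outside $[0,1]$) and in the size of $|y-x|$. This is handled by the case split above and by the uniform integrability of $|s|^{-\alpha}$ and $\log(1/|s|)$ near $0$.
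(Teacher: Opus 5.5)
Your proof is correct and follows essentially the paper's strategy: bound the modulus of the complex affine function from below by a real affine function of $t$, then use that $|s|^{-\alpha}$ and $\log(1/|s|)$ are integrable uniformly over unit-length intervals. The only difference is the direction you project onto: the paper normalizes $x=M>0$ and uses $|1+tv|\ge|1+t\operatorname{Re}v|$ with $\operatorname{Re}v\in[-2,2]$, leaving the small-slope case implicit, whereas you project onto the line of the segment, $|\gamma(t)|\ge|y-x|\,|t-t_0|$, and handle small $|y-x|$ by an explicit case split (your extra split of $[0,1]$ at $t=1/4$ is harmless but unnecessary).
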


\begin{proof}
Without loss of generality,
we may assume that
    $x=M>0$,
and consider $x+t(y-x)$. Set
$v=\frac{y-x}{x}.$
Then $|v|\leq2$ and
$
    |1+v|=\frac{|y|}{x}\leq1.
$
Hence,
\[
    |x+t(y-x)| \geq x |1+t  v|
    \geq M |1+t \operatorname{Re} v|.
\]

The result now follows from the integrability of $|1- t \operatorname{Re} v |^{-\alpha}$ and $\log(e/|1- t \operatorname{Re} v|)$, for $\operatorname{Re}v\in[-2,2]$.
\end{proof}

\begin{lem}\label{lem:kernel-estimates}
    Let $1<\beta\leq 2$. Then, for all $z,w\in \overline{\D}^2$, we have
    \begin{equation*}
        \operatorname{Re} k_\beta(z,w) \asymp |k_\beta(z,w)|.
    \end{equation*}
    Moreover, if $z,w\in \mathbb{T}^2$ and $1<\beta<2$, then
    \begin{equation*}
        |k_\beta(z,w)| \asymp \frac{1}{|z-w|^{2-\beta}}.
    \end{equation*}
    In the case $\beta=2$, we have
    \begin{equation*}
        |k_2(z,w)| \asymp \log \frac{e \sqrt{2}}{|z-w|}.
    \end{equation*}
\end{lem}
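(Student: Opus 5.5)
The plan is to write $a=z_1\overline{w}_1$ and $b=z_2\overline{w}_2$, both in $\overline{\D}$, and to set $u(t)=1-ta-(1-t)b=(1-t)(1-b)+t(1-a)$. Thus $u(t)$ runs along the segment joining $x:=1-b$ to $y:=1-a$. Both endpoints lie in the closed disc $\{|\zeta-1|\le 1\}$, which is contained in $\{\operatorname{Re}\zeta\ge0\}$. This disc is convex, so $u(t)$ stays in it for every $t\in[0,1]$. In particular $|\arg u(t)|\le\pi/2$, with equality only when $u(t)=0$.

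\emph{Step 1: the real part.} For $1<\beta<2$ we have $0<2-\beta<1$. Then
\[
|\arg u(t)^{-(2-\beta)}|=(2-\beta)|\arg u(t)|\le(2-\beta)\pi/2<\pi/2 .
\]
Hence $\operatorname{Re} u(t)^{-(2-\beta)}\ge \cos\big((2-\beta)\pi/2\big)\,|u(t)|^{-(2-\beta)}$. Integrating in $t$ gives
\[
\operatorname{Re}k_\beta(z,w)\ge c_\beta\int_0^1|u(t)|^{-(2-\beta)}dt\ge c_\beta|k_\beta(z,w)|.
\]
The reverse inequality $\operatorname{Re}k_\beta\le|k_\beta|$ is trivial. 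For $\beta=2$ the integrand is $\log(e/u)=\log(e/|u|)-i\arg u$. Since $|u|\le2$, we get $\log(e/|u|)\ge\log(e/2)>0$, and $|\arg u|\le\pi/2$. So $|\log(e/u)|\lesssim\log(e/|u|)=\operatorname{Re}\log(e/u)$, and integrating gives the claim. At points where $u$ vanishes the integrand is integrable, so these points cause no trouble.

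\emph{Step 2: the torus estimate.} Now take $z,w\in\mathbb T^2$. Then $|x|=|1-b|=|z_2-w_2|$ and $|y|=|z_1-w_1|$, so $M:=\max\{|x|,|y|\}\asymp|z-w|$. By Step 1, $|k_\beta|\asymp\int_0^1|u(t)|^{-(2-\beta)}dt$. Lemma \ref{lem:segment-integral-estimates} with $\alpha=2-\beta$ gives the upper bound $\lesssim M^{-(2-\beta)}$. For the lower bound, note $|u(t)|\le\max(|x|,|y|)=M$ by convexity of the modulus along a segment. Therefore $\int_0^1|u|^{-(2-\beta)}\ge M^{-(2-\beta)}$, which yields $|k_\beta|\asymp|z-w|^{-(2-\beta)}$.

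For $\beta=2$ I use $M\le2$ and rescale by a constant, since Lemma \ref{lem:segment-integral-estimates} requires $M\le1$. Applying the log estimate to $x/2$ and $y/2$ gives
\[
\int\log\frac{e}{|u|}\lesssim \log\frac{2e}{M}+\log 2\lesssim\log\frac{e\sqrt2}{|z-w|},
\]
using $M\ge|z-w|/\sqrt2$. Conversely, $|u|\le M\le|z-w|$ gives $\log(e/|u|)\ge\log(e/|z-w|)$. Since $|z-w|\le 2\sqrt2$, both $\log(e/|z-w|)$ and $\log(e\sqrt2/|z-w|)$ are comparable to each other and bounded below by a positive constant.

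The main obstacle is this last comparison of logarithms near $|z-w|\approx 2\sqrt2$: $\log(e/|z-w|)$ can become small or even negative there, so I must not rely on it directly. I would handle this region separately. There, Step 1 already gives $|k_2|\ge\operatorname{Re}k_2\ge\log(e/2)>0$, while $\log(e\sqrt2/|z-w|)$ is bounded above and below, so the comparison holds in this region as well.
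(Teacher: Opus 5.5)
Your argument is correct and is essentially the paper's: the half-plane bound $\operatorname{Re}u(t)\ge0$ gives $\operatorname{Re}k_\beta\asymp|k_\beta|\asymp\int_0^1|u(t)|^{-(2-\beta)}\,dt$ (resp.\ $\int_0^1\log(e/|u(t)|)\,dt$ for $\beta=2$), after which Lemma~\ref{lem:segment-integral-estimates} gives the upper bound and $|u(t)|\le M$ gives the lower bound. The only deviation is that you apply that lemma directly to the complex endpoints $1-z_2\overline{w}_2$ and $1-z_1\overline{w}_1$, via $|1-z_j\overline{w}_j|=|z_j-w_j|$, so that $M\asymp|z-w|$ holds globally rather than only for small angles (as in the paper's passage to $t\sin\theta+(1-t)\sin\varphi$); the price is the rescaling when $\beta=2$ and a separate treatment of large $|z-w|$, where, as you noticed, $\log(e/|z-w|)$ is not bounded below by a positive constant, but your fallback $\operatorname{Re}k_2\ge\log(e/2)$ settles it.
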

\begin{proof}
    The first part of the statement is an immediate consequence of Proposition~\ref{prop: kernel}. 

        We now assume that \(z,w\in\mathbb T^2\) and \(z\neq w\).
    Since the kernel is invariant under coordinate rotations (i.e., rotating each coordinate separately), we may
    assume that
    $w=(1,1)$, $z=(e^{i\theta},e^{i\varphi})$ and $\theta,\varphi\in[-\pi,\pi].$    When \(\theta\) and \(\varphi\) are sufficiently small,
    \begin{equation}
        |z-w|\asymp \max\{|\theta|,|\varphi|\}.
        \label{eq:angle-distance}
    \end{equation}

    Let \(1<\beta<2\) and put \(\alpha=2-\beta\). Then,
    \begin{equation}
        |k_\beta(z,w)|
        \asymp
        \int_0^1
        \left|
            t(1-e^{i\theta})+(1-t)(1-e^{i\varphi})
        \right|^{-\alpha}\,dt.
        \label{eq:kernel-power-integral}
    \end{equation}
    Uniformly in \(t\),
    $
        \left|
            t(1-e^{i\theta})+(1-t)(1-e^{i\varphi})
        \right|
        \lesssim\max\{|\theta|,|\varphi|\},
    $
    which gives the required lower bound in
    \eqref{eq:kernel-power-integral}. On the other hand,
    \[
        \left|
            t(1-e^{i\theta})+(1-t)(1-e^{i\varphi})
        \right|
        \geq
        |t\sin\theta+(1-t)\sin\varphi|.
    \]
    Since
    \[
        \max\{|\sin\theta|,|\sin\varphi|\}
        \asymp\max\{|\theta|,|\varphi|\},
    \]
    estimates \eqref{eq:affine-power-estimate} and
    \eqref{eq:angle-distance} give
    \[
        |k_\beta(z,w)|
        \asymp
        \frac{1}{|z-w|^{2-\beta}}.
    \]

    For \(\beta=2\), the same
    argument, using \eqref{eq:affine-log-estimate}, gives
    \[
    \begin{aligned}
        |k_2(z,w)|
        &\asymp
        \int_0^1
        \log\frac{e}{
        \left|t(1-e^{i\theta})+(1-t)(1-e^{i\varphi})\right|}
        \,dt \\
        &\asymp
        \log\frac{e}{\max\{|\theta|,|\varphi|\}}
        \asymp
        \log\frac{e\sqrt2}{|z-w|}.
    \end{aligned}
    \]
\end{proof}
\begin{defn}
Let $E \subset  \mathbb{T}^2$ be a Borel set and let $\mathcal P(E)$ denote the set of all
Borel probability measures supported on $E$.  
For $\beta \in (0,2]$ and $\mu \in \mathcal P(E)$, the $\beta$-energy of $\mu$ is
\begin{equation*}
    I_{\beta}[\mu]
    =
    \iint_{\mathbb{T}^2 \times \mathbb{T}^2} k_{\beta}\!\left( \zeta,\eta \right)
    \, d\mu(\zeta)\, d\mu(\eta).
\end{equation*}
The \emph{$\beta$-capacity} of $E$ is then defined by
\begin{equation*}    
    \operatorname{cap}_{\beta}(E)
    =
    \frac{1}{\inf \{ I_{\beta}[\mu] : \mu \in \mathcal P(E) \}}.
\end{equation*}
\end{defn}
The following is a useful tool for determining the non-cyclicity of a function.

\begin{thm} \label{thm:cap_noncyclic}
    Suppose $f \in \mathcal D_{\beta}(\D^2) \cap \mathcal{C} (\overline{\D^2})$ satisfies $\operatorname{cap}_{\beta}(\mathcal{Z}(f) \cap \mathbb{T}^2) > 0$. 
    Then $f$ is not cyclic in $\mathcal D_\beta(\D^2)$.
\end{thm}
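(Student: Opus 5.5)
Suppose, for a contradiction, that $f$ is cyclic in $\mathcal D_\beta(\D^2)$. Then there are polynomials $p_n$ with $p_nf\to1$ in $\mathcal D_\beta(\D^2)$. Set $E:=\mathcal{Z}(f)\cap\mathbb T^2$. Since $\operatorname{cap}_\beta(E)>0$, there is a probability measure $\mu\in\mathcal P(E)$ with $I_\beta[\mu]<\infty$. The plan is to build from $\mu$ a bounded linear functional on $\mathcal D_\beta(\D^2)$ that kills $[f]$ but not the constant $1$.

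First I will check that the potential $g_\mu(z):=\int_{\mathbb T^2}k_\beta(z,\zeta)\,d\mu(\zeta)$ lies in $\mathcal D_\beta(\D^2)$ and satisfies $\|g_\mu\|_\beta^2=I_\beta[\mu]$. This comes from the reproducing property of Proposition~\ref{prop: kernel} (and Remark~\ref{rem:kernel-beta-two} when $\beta=2$). The cleanest route is to work with the dilated measures $\mu_r$, the pushforward of $\mu$ under $\zeta\mapsto r\zeta$, which are supported inside $\D^2$. For these, $\|g_{\mu_r}\|^2=\iint k_\beta(r\zeta,r\eta)\,d\mu\,d\mu$. This is bounded by $I_\beta[\mu]$ up to a constant, since $\operatorname{Re}k_\beta\asymp|k_\beta|$ by Lemma~\ref{lem:kernel-estimates} and the kernel is a positive-coefficient series, so it increases in $r$. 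Weak compactness then gives $g_\mu$. For $\beta\le1$, $\operatorname{cap}_\beta(E)>0$ forces $E$ to be large (for $\beta<1$ the kernel is bounded on $\mathbb T^2$, so the capacity is positive for every nonempty set). The argument is uniform across $\beta$, however, as long as the kernel expansion holds. For $\beta<0$ or a general range I will simply rely on the kernel formula.

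Next, define $\Lambda(h):=\langle h,g_\mu\rangle_\beta$. For $h$ continuous on $\overline{\D^2}$ and in $\mathcal D_\beta$, I claim $\Lambda(h)=\int_{\mathbb T^2}h\,d\mu$. Indeed, $\langle h,g_{\mu_r}\rangle=\int h(r\zeta)\,d\mu(\zeta)$ by the reproducing property, and letting $r\to1$ gives the claim: the left side converges because $g_{\mu_r}\to g_\mu$ weakly, and the right side converges by uniform continuity of $h$. Applying this to $h=p_nf$, which is continuous on $\overline{\D^2}$ since $f$ is, gives $\Lambda(p_nf)=\int p_nf\,d\mu=0$ because $f=0$ on $E\supset\operatorname{supp}\mu$. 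On the other hand, $\Lambda(1)=\mu(E)=1$. Since $\Lambda$ is continuous and $p_nf\to1$, we get $0=\Lambda(p_nf)\to\Lambda(1)=1$, a contradiction.

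The main obstacle is the first step: justifying that $g_{\mu_r}$ is bounded in $\mathcal D_\beta$ uniformly in $r$, that is, comparing $k_\beta(r\zeta,r\eta)$ with $k_\beta(\zeta,\eta)$ when the latter may be infinite on the diagonal. I would handle this with the monotonicity in $r$ of the positive-coefficient expansion of $k_\beta$, combined with Fatou/monotone convergence, and with the estimate $|k_\beta|\lesssim\operatorname{Re}k_\beta$ to control complex values. A secondary point is that the inner product used for the kernel is only equivalent to the original norm. This does not matter, since cyclicity and continuity of functionals are unaffected by passing to an equivalent norm.
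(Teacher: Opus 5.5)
Your overall scheme is the Brown--Shields argument that the paper cites for this theorem. Everything after the uniform bound is fine: once $\sup_{r<1}\|g_{\mu_r}\|_\beta<\infty$, the functionals $h\mapsto\langle h,g_{\mu_r}\rangle=\int h(r\zeta)\,d\mu(\zeta)$ converge, first on polynomials and then everywhere by density, to a bounded $\Lambda$ with $\Lambda(p_nf)=0$ and $\Lambda(1)=1$. The gap is exactly in the step you flagged, and the tools you name do not close it. Write $k_\beta(z,w)=\sum_{k,l}c_{kl}(z_1\overline{w_1})^k(z_2\overline{w_2})^l$ with $c_{kl}>0$, and $\widehat\mu(k,l)=\int\zeta_1^k\zeta_2^l\,d\mu(\zeta)$. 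Positivity of the coefficients makes $F(r):=\iint k_\beta(r\zeta,r\eta)\,d\mu\,d\mu=\sum_{k,l}c_{kl}r^{2(k+l)}|\widehat\mu(k,l)|^2$ monotone. It does not make the integrand monotone: for $\zeta_1\overline{\eta_1}=\zeta_2\overline{\eta_2}=-1$ the integrand is $(1+r^2)^{\beta-2}$, which decreases. So monotonicity only identifies $\sup_rF(r)$ with $\sum c_{kl}|\widehat\mu(k,l)|^2$. It does not compare this with $I_\beta[\mu]=\iint k_\beta(\zeta,\eta)\,d\mu\,d\mu$. Fatou's lemma, applied for $1<\beta\le2$ to the nonnegative functions $\operatorname{Re}k_\beta(r\zeta,r\eta)$, only gives $I_\beta[\mu]\le\lim_rF(r)$, which is the wrong direction.

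What is missing is a majorant that is uniform in $r$. For $|w|\le1$ one has $1-r^2\le|1-r^2w|$, hence $|1-w|\le2|1-r^2w|$. Set $w_t=t\zeta_1\overline{\eta_1}+(1-t)\zeta_2\overline{\eta_2}$ and take $1<\beta<2$. Then $|k_\beta(r\zeta,r\eta)|\le2^{2-\beta}\int_0^1|1-w_t|^{\beta-2}\,dt\le C_\beta\operatorname{Re}k_\beta(\zeta,\eta)$. The last step holds because $|\arg(1-w_t)|\le\pi/2$ gives $\operatorname{Re}(1-w_t)^{\beta-2}\ge\cos\frac{(2-\beta)\pi}{2}\,|1-w_t|^{\beta-2}$. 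The same argument with $\log\frac{e}{|1-w_t|}\ge\log\frac e2>0$ handles $\beta=2$. Integrating against $\mu\otimes\mu$ then gives $\|g_{\mu_r}\|_\beta^2=F(r)\le C_\beta I_\beta[\mu]$ directly.

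The range $0<\beta\le1$ is not covered by what you wrote, and the paper needs it: in Case~1 the theorem is applied for all $\beta\in(\beta_c,2)$ with $\beta_c\le1$. Lemma~\ref{lem:kernel-estimates} is stated only for $1<\beta\le2$, and $\operatorname{Re}k_\beta\asymp|k_\beta|$ genuinely fails below $1$. If $\zeta_1\overline{\eta_1}=\zeta_2\overline{\eta_2}=e^{i\theta}$, then $k_\beta(\zeta,\eta)=(1-e^{i\theta})^{\beta-2}$. Its real part is negative for small $\theta>0$ when $\beta<1$; when $\beta=1$ the real part stays equal to $1/2$ while the modulus blows up.

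Your parenthetical claim that the kernel is bounded on $\mathbb T^2$ for $\beta<1$ is also false. In fact $k_\beta(r\zeta,r\zeta)\asymp\sum_m(m+1)^{1-\beta}r^{2m}\to\infty$ for every $\beta\le2$; boundedness only sets in for $\beta>2$ (cf.\ Lemma~\ref{lem:bounded-point-evaluations}). In this range you should read the energy as the Fourier expression $\sum_{k,l}c_{kl}|\widehat\mu(k,l)|^2=\lim_{r\to1}F(r)$. This is how the paper actually evaluates $I_\beta[\mu]$ in the proof of Proposition~\ref{prop:case1-noncyclicity}. With that reading, your monotonicity observation alone gives $\sup_r\|g_{\mu_r}\|_\beta^2=I_\beta[\mu]$, and the rest of your argument goes through unchanged.
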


The proof follows directly the techniques of \cite{BS84}, similar to \cite{Knese2019_Aniso_Bidisk}.

The following bounded point-evaluation property is proved in
\cite[Proposition~34, p.~14]{Pouriya_FirstPaper}.
\begin{lem}\label{lem:bounded-point-evaluations}
Let $\beta>2$. Then every $f\in \mathcal D_\beta(\mathbb D^2)$ extends
continuously to $\overline{\mathbb D^2}$ and, for every
$z_0\in\overline{\mathbb D^2}$, the point evaluation functional
$\Lambda_{z_0}:\mathcal D_\beta(\mathbb D^2)\longrightarrow\mathbb C,$ $\Lambda_{z_0}(f):=f(z_0),$
is bounded.
\end{lem}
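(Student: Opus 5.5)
The plan is to show that the Taylor series of every $f\in\mathcal D_\beta(\mathbb D^2)$, $\beta>2$, converges absolutely and uniformly on $\overline{\mathbb D^2}$. This gives the continuous extension and a uniform bound on point evaluations at the same time.

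Write $f(z)=\sum_{k,l\ge0}a_{kl}z_1^kz_2^l$. For $z\in\overline{\mathbb D^2}$ we have $|z_1^kz_2^l|\le1$, so Cauchy--Schwarz gives
\[
\sum_{k,l\ge0}|a_{kl}|\,|z_1|^k|z_2|^l
\le\Bigg(\sum_{k,l\ge0}|a_{kl}|^2(k+l+1)^\beta\Bigg)^{1/2}
\Bigg(\sum_{k,l\ge0}(k+l+1)^{-\beta}\Bigg)^{1/2}.
\]
The key computation is the second factor. Grouping by $m=k+l$, there are exactly $m+1$ pairs with $k+l=m$. Hence
\[
\sum_{k,l\ge0}(k+l+1)^{-\beta}=\sum_{m\ge0}(m+1)^{1-\beta}=:C_\beta^2,
\]
and this is finite precisely because $\beta>2$. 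This is the only place the hypothesis enters, and it is where the threshold $2$ comes from.

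It follows that the partial sums of the Taylor series converge uniformly on $\overline{\mathbb D^2}$ by the Weierstrass M-test, with majorant $\sum|a_{kl}|<\infty$. Each partial sum is a polynomial, so the limit $F$ is continuous on $\overline{\mathbb D^2}$, and $F=f$ on $\mathbb D^2$. This gives the continuous extension.

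We then define $\Lambda_{z_0}(f):=F(z_0)$ for $z_0\in\overline{\mathbb D^2}$. The same estimate yields
\[
|\Lambda_{z_0}(f)|\le\sum|a_{kl}|\le C_\beta\|f\|_{\mathcal D_\beta}.
\]
So $\Lambda_{z_0}$ is bounded, uniformly in $z_0$, and it is clearly linear.

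I do not expect a real obstacle here. The only point needing care is that "extends continuously" should mean the extension is given by the boundary values of the uniformly convergent series. This is consistent with the radial limits, because uniform convergence on the closed bidisc forces $f(r\zeta)\to F(\zeta)$ as $r\to1$.
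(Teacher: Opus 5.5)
Your proof is correct and complete. Since there are $m+1$ pairs $(k,l)$ with $k+l=m$, one has $\sum_{k,l\ge0}(k+l+1)^{-\beta}=\sum_{m\ge0}(m+1)^{1-\beta}<\infty$ exactly when $\beta>2$. Cauchy--Schwarz then gives absolute and uniform convergence of the Taylor series on $\overline{\mathbb D^2}$, which yields both the continuous extension and the uniform bound $|f(z_0)|\le C_\beta\|f\|_{\mathcal D_\beta}$.

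The paper does not prove this lemma itself; it only cites \cite[Proposition~34]{Pouriya_FirstPaper}. Your self-contained argument is the standard one, equivalent to the boundedness on $\overline{\mathbb D^2}$ of the kernel diagonal $\sum_{k,l}(k+l+1)^{-\beta}|z_1|^{2k}|z_2|^{2l}$, so nothing is missing.
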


The following consequence of bounded point evaluations can be found in
\cite[Theorem~14 and Remark~15, p.~7]{Pouriya_FirstPaper}.
\begin{lem}\label{lem:zero-at-bounded-point-evaluation}
Let $z_0\in\overline{\mathbb D^2}$ be a bounded point evaluation for
$\mathcal D_\beta(\mathbb D^2)$. If $f\in \mathcal D_\beta(\mathbb D^2)$ and $f(z_0)=0$
then $f$ is not cyclic in $\mathcal D_\beta(\mathbb D^2)$.
\end{lem}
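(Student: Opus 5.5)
The plan is to argue by contradiction, using only the continuity of the point evaluation $\Lambda_{z_0}$. Suppose $f$ were cyclic in $\mathcal D_\beta(\mathbb D^2)$. Then, as noted in the introduction, there is a sequence of polynomials $(p_n)$ with $p_nf\to 1$ in $\mathcal D_\beta(\mathbb D^2)$. Since $z_0$ is a bounded point evaluation, $\Lambda_{z_0}$ is a bounded linear functional. It follows that
\[
\Lambda_{z_0}(p_nf)\longrightarrow \Lambda_{z_0}(1)=1 .
\]

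The one point requiring care is the multiplicativity identity
\[
\Lambda_{z_0}(p_nf)=p_n(z_0)f(z_0).
\]
For $z_0\in\mathbb D^2$ this is immediate, since both sides are pointwise values of holomorphic functions. For $z_0\in\partial\mathbb D^2$, I would interpret $\Lambda_{z_0}$ as in Lemma~\ref{lem:bounded-point-evaluations}: it is evaluation of the continuous extension to $\overline{\mathbb D^2}$. Since $p_n$ is a polynomial, hence continuous on $\overline{\mathbb D^2}$, the continuous extension of $p_nf$ equals $p_n$ times the continuous extension of $f$, and the identity follows.

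In a more general setting, where $\Lambda_{z_0}$ is only defined as the continuous extension of evaluation from a dense subclass, I would argue by approximation instead. Take $g_k\to f$ with $g_k$ in the subclass, so that $\Lambda_{z_0}(g_k)=g_k(z_0)$. Multiplication by the fixed polynomial $p_n$ is a bounded multiplier on $\mathcal D_\beta(\mathbb D^2)$, so $p_ng_k\to p_nf$, and therefore
\[
\Lambda_{z_0}(p_nf)=\lim_k p_n(z_0)g_k(z_0)=p_n(z_0)\Lambda_{z_0}(f).
\]

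With the identity in hand, the hypothesis $f(z_0)=0$ gives $\Lambda_{z_0}(p_nf)=0$ for every $n$. This contradicts $\Lambda_{z_0}(p_nf)\to1$, so $f$ is not cyclic.

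The only step I expect to need any attention is the multiplicativity identity at boundary points, which is handled by continuity of the extension, or by the multiplier approximation above. Everything else is formal.
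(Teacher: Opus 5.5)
Your argument is correct and is essentially the paper's own: the paper does not prove this lemma itself but cites the second author's earlier work, and its introduction records the same observation you use (if $p_nf\to1$, applying the bounded functional $\Lambda_{z_0}$ gives $0=p_n(z_0)f(z_0)\to1$). Your extra care about $\Lambda_{z_0}(p_nf)=p_n(z_0)\Lambda_{z_0}(f)$ at boundary points, either via the continuous extension from Lemma~\ref{lem:bounded-point-evaluations} or via density together with boundedness of polynomial multipliers, correctly fills in a detail the paper leaves implicit.
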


In the proof of cyclicity, we shall invoke the following lemma, which is inspired by
\cite[Lemma~5.3]{AlemanPerfektRichter}.

\begin{lem}
    Let \(0<\beta\leq 2\), and let
    \(\phi,\psi\in\mathcal{M}(\mathcal D_\beta(\mathbb{D}^2))\). If $\frac{\phi}{\psi}\in H^\infty(\mathbb{D}^2)$,
    then $\frac{\phi^2}{\psi}
        \in\mathcal{M}(\mathcal D_\beta(\mathbb{D}^2))$.
    In particular, $\phi^2\in[\psi]$.
    
    \label{Perfekt}
\end{lem}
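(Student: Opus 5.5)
The plan is to follow Aleman--Perfekt--Richter. Write $h:=\phi/\psi\in H^\infty(\mathbb D^2)$. Then $\phi^2/\psi=h\phi$, so we must show that $h\phi g\in\mathcal D_\beta(\mathbb D^2)$ for every $g\in\mathcal D_\beta(\mathbb D^2)$, with the bound $\|h\phi g\|_\beta\lesssim\|g\|_\beta$. Since $h\phi g$ is bounded by $\|h\|_\infty\|\phi g\|$ at the level of $H^2$-type norms, only the derivative part of the norm needs control. We use the integral norm \eqref{eq: Integral Norm Defn} when $\beta<2$. Via Remark~\ref{rem:radial-derivative} and the analogous square-function description for $\beta=2$ (or a direct weighted estimate), we estimate $\int_0^1\int_{\mathbb T^2}|R(h\phi g)(\rho\zeta)|^2\,dm_2(\zeta)(1-\rho)^{1-\beta}d\rho$.

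The key algebraic identity is
\[
R(h\phi)=R\!\left(\frac{\phi^2}{\psi}\right)=2h\,R\phi-h^2R\psi .
\]
It follows from $R(\phi^2/\psi)=2\phi R\phi/\psi-\phi^2R\psi/\psi^2$. Hence
\[
R(h\phi g)=h\phi\,Rg+2h g\,R\phi-h^2 g\,R\psi .
\]
The first term is handled by $|h\phi Rg|\le\|h\|_\infty\|\phi\|_\infty|Rg|$, using that multipliers are bounded. For the other two terms, $|2hgR\phi|\le 2\|h\|_\infty|gR\phi|$ and $|h^2gR\psi|\le\|h\|_\infty^2|gR\psi|$. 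It therefore suffices to know that
\[
\int_0^1\!\!\int_{\mathbb T^2}|g\,R\phi|^2(\rho\zeta)\,dm_2\,(1-\rho)^{1-\beta}d\rho\lesssim\|g\|_\beta^2 ,
\]
and likewise for $\psi$. This follows from the multiplier property. Indeed, $gR\phi=R(\phi g)-\phi Rg$, so its weighted norm is at most $\|\phi g\|_\beta+\|\phi\|_\infty\|g\|_\beta\lesssim\|g\|_\beta$. The point is that multiplier membership is equivalent to a Carleson-type condition, but we only need this one-line subtraction, which avoids any characterization of multipliers. The same bookkeeping works for $\beta=2$ if we use the equivalent norm $\|f\|_2^2\asymp|f(0)|^2+\int_0^1\int_{\mathbb T^2}|Rf(\rho\zeta)|^2dm_2\,\frac{d\rho}{1-\rho}$ restricted to, say, $\rho\ge 1/2$. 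This norm should follow from the coefficient definition exactly as \eqref{eq: Integral Norm Defn} does.

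For the "in particular", take polynomials $p_n\to 1$ in $\mathcal D_\beta$. Since $\phi^2/\psi$ is a multiplier, $\phi^2=(\phi^2/\psi)\psi$. Approximate $\phi^2/\psi$ by polynomials in a suitable sense: for $0<r<1$ the dilations $(\phi^2/\psi)_r$ are uniform limits of polynomials. Hence, for each fixed $r$, the product $(\phi^2/\psi)_r\psi$ is a limit of polynomials times $\psi$, and so lies in $[\psi]$. Moreover $\|(\phi^2/\psi)_r\|_{\mathcal M}$ is bounded in $r$, because $\mathcal D_\beta$ norms are dilation-monotone and the multiplier estimate above passes to dilations. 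Therefore $(\phi^2/\psi)_r\psi$ is bounded in $\mathcal D_\beta$ and converges pointwise to $\phi^2$. It thus converges weakly to $\phi^2$, and since $[\psi]$ is weakly closed, $\phi^2\in[\psi]$.

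The main obstacle is this last step, specifically the uniform bound on the multiplier norms of the dilations. The first attempt will be the rotation-averaging argument, $f_r=\int f(e^{i\theta}\cdot)P_r(\theta)\,d\theta$. It uses that rotations $z\mapsto e^{i\theta}z$ are isometries of $\mathcal D_\beta(\mathbb D^2)$ and so preserve multiplier norms, while the Poisson averaging does not increase the multiplier norm.
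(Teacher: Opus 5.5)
Your proof that $\phi^2/\psi$ is a multiplier is the paper's argument written with integral norms. The identity $R(\phi^2/\psi)=2hR\phi-h^2R\psi$ combined with the subtraction $gR\phi=R(\phi g)-\phi Rg$ is exactly the paper's observation: $R\theta$ maps $\mathcal D_\beta(\mathbb D^2)$ boundedly into $\mathcal D_{\beta-2}(\mathbb D^2)$ for every multiplier $\theta$, and $H^\infty(\mathbb D^2)$ multiplies $\mathcal D_{\beta-2}(\mathbb D^2)$ when $\beta\le 2$.

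There is one concrete error: the norm you propose for $\beta=2$ is infinite for every nonconstant $f$. The function $\rho\mapsto\int_{\mathbb T^2}|Rf(\rho\zeta)|^2\,dm_2(\zeta)=\sum_{k,l}(k+l)^2|a_{kl}|^2\rho^{2(k+l)}$ is nondecreasing and strictly positive on $[1/2,1)$. Hence its integral against $d\rho/(1-\rho)$ diverges, and restricting to $\rho\ge 1/2$ does not help. The correct replacement is $\|f\|_2^2\asymp|f(0)|^2+\|Rf\|_{H^2(\mathbb D^2)}^2$, that is, $\mathcal D_{\beta-2}=\mathcal D_0=H^2(\mathbb D^2)$ as in Remark~\ref{rem:radial-derivative}. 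All your estimates are pointwise bounds $|F|\le C|G|$ on $\mathbb D^2$, so they pass to $\sup_\rho$ of the integral means and the case $\beta=2$ goes through once this is fixed.

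For the final assertion $\phi^2\in[\psi]$, you do more than the paper. The paper only writes $\phi^2=\psi\,(\phi^2/\psi)$ and tacitly uses that $[\psi]$ is invariant under all multipliers; your dilation argument supplies this and is correct after two adjustments.

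First, uniform convergence of polynomials to $\Psi_r:=(\phi^2/\psi)_r$ does not give $p_n\psi\to\Psi_r\psi$ in $\mathcal D_\beta(\mathbb D^2)$. However, $\Psi_r$ is holomorphic on $r^{-1}\mathbb D^2$, so its Taylor polynomials $p_N$ satisfy $\|\Psi_r-p_N\|_\infty+\|R(\Psi_r-p_N)\|_\infty\to0$. They therefore converge in multiplier norm by the quantitative form of Proposition~\ref{prop:ball-restrictions-multipliers}.

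Second, dilation-monotonicity of the $\mathcal D_\beta$ norm does not by itself bound $\|M_{\Psi_r}\|$, and neither does applying the multiplier estimate to dilations, since that needs uniform bounds on $\|M_{\phi_r}\|$ and $\|M_{\psi_r}\|$. The rotation average is what does the work. With $U_\theta g(z)=g(e^{i\theta}z)$ unitary on $\mathcal D_\beta(\mathbb D^2)$, one has $M_{\Psi_r}=\int U_\theta M_{\Psi}U_\theta^{*}\,P_r(\theta)\,\frac{d\theta}{2\pi}$ in the strong operator sense, so $\|M_{\Psi_r}\|\le\|M_\Psi\|$.

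Then $\Psi_r\psi\in[\psi]$ is bounded and converges pointwise to $\phi^2$, hence weakly. Since $[\psi]$ is weakly closed, $\phi^2\in[\psi]$, which makes rigorous a step the paper leaves implicit.
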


\begin{proof}
    We write
    $g\in\mathcal{M}(\mathcal D_\beta,\mathcal D_{\beta-2})$
    if multiplication by \(g\) defines a bounded operator from
    \(\mathcal D_\beta(\mathbb{D}^2)\) into \(\mathcal D_{\beta-2}(\mathbb{D}^2)\).

    If \(\theta\in\mathcal{M}(\mathcal D_\beta(\mathbb{D}^2))\), then
    \begin{equation}
        R\theta\in\mathcal{M}(\mathcal D_\beta,\mathcal D_{\beta-2}).
        \label{eq:radial-multiplier-property}
    \end{equation}
    Indeed, for \(u\in \mathcal D_\beta(\mathbb{D}^2)\),
    \[
        (R\theta)u=R(\theta u)-\theta Ru.
    \]
    The first term belongs to \(\mathcal D_{\beta-2}(\mathbb{D}^2)\) because
    \(\theta u\in \mathcal D_\beta(\mathbb{D}^2)\), while the second belongs to
    \(\mathcal D_{\beta-2}(\mathbb{D}^2)\) because
    $\theta\in H^\infty(\mathbb{D}^2)
        =\mathcal{M}(\mathcal D_{\beta-2}(\mathbb{D}^2)).$
    Set $h:=\frac{\phi}{\psi}$ and $\Psi:=\frac{\phi^2}{\psi}=\phi h.$
    Then \(\Psi\in H^\infty(\mathbb{D}^2)\), and
    \[
        R\Psi=2hR\phi-h^2R\psi.
    \]
    By \eqref{eq:radial-multiplier-property},
    $R\phi,R\psi\in\mathcal{M}(\mathcal D_\beta,\mathcal D_{\beta-2})$. Since
    \(h,h^2\in H^\infty(\mathbb{D}^2)\), it follows that
    \[
        R\Psi\in\mathcal{M}(\mathcal D_\beta,\mathcal D_{\beta-2}).
    \]
    For $u \in \mathcal D_\beta(\mathbb{D}^2)$, we have
    \[
        R(\Psi u)=(R\Psi)u+\Psi Ru.
    \]
    Thus, $R(\Psi u)\in \mathcal D_{\beta-2}(\mathbb{D}^2)$.
    Remark~\ref{rem:radial-derivative}, in the form
    \eqref{eq:radial-norm-equivalence}, gives
    $\Psi u\in \mathcal D_\beta(\mathbb{D}^2)$ and
    \[
        \Psi\in\mathcal{M}(\mathcal D_\beta(\mathbb{D}^2)).
    \]
    Finally,
    $
        \phi^2=\psi\Psi.
    $
    This implies that
    \(\phi^2\in[\psi]\).
\end{proof}

\section{Proof of Theorem~\ref{MainThm1}} \label{Section3} Let us explain the structure of the proof. First, we collect the constructions that will be used in the proof. Then, we consider separately the cases $\beta_c\in(0,1]$ and
$\beta_c\in(1,2]$. In the first case, we use the radial dilation
method. In the second case, we lift a one-variable analytic deformation
to the bidisk. Before we proceed with the case $\beta_c\in(0,1],$ we provide a brief explanation of the main idea that leads to the proof.

\subsection{From the unit ball to the bidisk} Here we briefly describe the main idea of our proof for the case $\beta_c\in(0,1].$ 
First, let us recall the notion of a $\mathcal K$-set, introduced by Bruna and
Ortega in the paper \cite{Bruna1986}. On $\partial\mathbb B_n$, we use the Kor\'anyi
pseudodistance
\[
    d_K(\zeta,\eta)
    :=\bigl|1-\langle\zeta,\eta\rangle\bigr|,
    \qquad \zeta,\eta\in\partial\mathbb B_n,
\]
and denote by
\[
    K(\zeta,r)
    :=\bigl\{\eta\in\partial\mathbb B_n:
                   d_K(\zeta,\eta)<r\bigr\}
\]
the corresponding Kor\'anyi ball.

\begin{defn}
Let $E\subset\partial\mathbb B_n$ be closed. For
$A\subset\partial\mathbb B_n$, let $N_\varepsilon(A)$ denote the
smallest number of Kor\'anyi balls of radius $\varepsilon$ needed to
cover $A$. We say that $E$ is a \emph{$\mathcal K$-set} if there exists a
constant $C>0$ such that
\[
    \int_0^r N_\varepsilon\bigl(E\cap K(\zeta,r)\bigr)\,d\varepsilon
    \leq Cr
\]
for every $\zeta\in\partial\mathbb B_n$ and every $0<r\leq1$.
\end{defn}

Suppose that $\Gamma\subset\partial\mathbb B_n$ is a smooth compact
transversal curve with a smooth regular parametrization
$\gamma:\mathbb T\to\Gamma$. Then
\[
    d_K\bigl(\gamma(s),\gamma(t)\bigr)\asymp|s-t|
\]
locally and uniformly on $\mathbb T$. Consequently, the Kor\'anyi
covering numbers of a closed set $E\subset\Gamma$ are comparable to the
ordinary covering numbers of $\gamma^{-1}(E)$; see for example
\cite[Lemma~4.1]{Bruna1986}.

Consider the diagonal
$
    \Delta
    :=\bigl\{(e^{it},e^{it}):t\in\mathbb R\bigr\}
    \subset\mathbb T^2
    \subset\partial\mathbb B_2(0,\sqrt2),
$
parametrized by $\gamma(t)=(e^{it},e^{it})$. Since
$
    \bigl\langle\gamma'(t),\gamma(t)\bigr\rangle=2i\neq0,
$
the curve $\Delta$ is transversal to the complex tangent bundle of
$\partial\mathbb B_2(0,\sqrt2)$.

With a slight abuse of notation, we also denote by $d_K$ the rescaled
Kor\'anyi pseudodistance on
$
    \overline{\mathbb B_2(0,\sqrt2)}
    \times\partial\mathbb B_2(0,\sqrt2),
$
defined by
\[
    d_K(z,\zeta)
    :=\left|1-\frac{\langle z,\zeta\rangle}{2}\right|.
\]
For a closed set $A\subset\partial\mathbb B_2(0,\sqrt2)$, we write
\[
    d_K(z,A):=\inf_{\zeta\in A}d_K(z,\zeta).
\]
We use the same terminology for $\mathcal K$-sets on
$\partial\mathbb B_2(0,\sqrt2)$, after the dilation
$z\mapsto z/\sqrt2$.

We shall use two constructions on $\mathbb B_2(0,\sqrt2)$.

\begin{prop}\label{prop:ball-construction-E}
Let $E\subset\Delta$ be a $\mathcal K$-set. Then there exists
$H_E\in A^1\bigl(\mathbb B_2(0,\sqrt2)\bigr)$
such that
\[
    |H_E(z)|\asymp d_K(z,E),
    \qquad z\in\overline{\mathbb B_2(0,\sqrt2)},
\]
and, for every integer $j\geq1$,
\[
    |R^jH_E(z)|
    \lesssim d_K(z,E)^{1-j},
    \qquad z\in\mathbb B_2(0,\sqrt2).
\]
In particular, if
$h_E:=H_E|_{\mathbb D^2},$
then $h_E$ is nonvanishing on $\mathbb D^2$ and
$ \mathcal Z(h_E)\cap\mathbb T^2=E.$
\end{prop}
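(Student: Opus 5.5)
The plan is to construct $H_E$ as the exponential of a holomorphic "Cauchy-type" integral along the transversal curve $\Delta$. This is the several-variable analogue of the classical outer function $\exp\bigl(\int \log\operatorname{dist}(\zeta,E)\,\frac{\zeta+z}{\zeta-z}\,dm\bigr)$ attached to a Carleson set. Indeed, via the parametrization $\gamma(t)=(e^{it},e^{it})$ and the comparability $d_K(\gamma(s),\gamma(t))\asymp|s-t|$ (\cite[Lemma~4.1]{Bruna1986}), the $\mathcal K$-set hypothesis on $E\subset\Delta$ becomes the condition
\[
\int_0^r N_\varepsilon\bigl(F\cap I\bigr)\,d\varepsilon\lesssim r,\qquad |I|=r,
\]
for the closed set $F:=\gamma^{-1}(E)\subset\mathbb T$. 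This is equivalent to the uniform local Carleson condition
\[
\frac1{|I|}\int_I\log\frac{|I|}{\operatorname{dist}(t,F)}\,dt\lesssim1
\]
for every arc $I$.

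\textbf{Construction and modulus estimate.} Set $\delta(t):=\operatorname{dist}(t,F)$ and define
\[
G(z):=c\int_{\mathbb T}\log\delta(t)\,C(z,t)\,dt,
\qquad
C(z,t):=\frac{1}{1-\langle z,\gamma(t)\rangle/2}-\frac12,
\]
with $H_E:=e^{G}$. The constant $c>0$ is chosen so that the real part of $c\,C(\cdot,t)$ behaves like a Poisson kernel along $\Delta$. Transversality, via $\langle\gamma',\gamma\rangle=2i\neq0$, gives
\[
\operatorname{Re}C(z,t)\asymp\frac{1-|z|^2/2}{d_K(z,\gamma(t))^2}+(\text{bounded}),
\]
and $\int\operatorname{Re}C(z,t)\,dt$ is essentially constant. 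Fix $z$ and let $r:=d_K(z,\Delta)+d_K(z,E)$. Split $\mathbb T$ into the arc $I_z$ of length $\asymp r$ around the "projection" of $z$ onto $\Delta$, together with dyadic dilates of $I_z$. On $I_z$ the kernel is $\asymp r^{-1}$. The Carleson condition then shows that the average of $\log\delta$ over $I_z$ equals $\log d_K(z,E)+O(1)$, while on the dyadic annuli the decay of the kernel makes the contributions summable. This gives $\operatorname{Re}G(z)=\log d_K(z,E)+O(1)$, that is, $|H_E|\asymp d_K(z,E)$. In particular $H_E$ has no zeros in the open ball, so $h_E$ is nonvanishing on $\mathbb D^2$, and $\mathcal Z(h_E)\cap\mathbb T^2=E$.

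\textbf{Derivative estimates.} Differentiating under the integral,
\[
R^jG(z)=c\int\log\delta(t)\,R^jC(z,t)\,dt,
\qquad
|R^jC(z,t)|\lesssim d_K(z,\gamma(t))^{-1-j}.
\]
Since $\int R^jC(z,t)\,dt$ is a bounded smooth function, one may subtract the constant $\log d_K(z,E)$ from $\log\delta$. Applying the same dyadic splitting together with the Carleson condition then yields $|R^jG(z)|\lesssim d_K(z,E)^{-j}$. The Faà di Bruno formula, applied to $R^j e^{G}$ and combined with $|e^G|\asymp d_K(z,E)$, gives $|R^jH_E|\lesssim d_K(z,E)^{1-j}$. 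For $H_E\in A^1$, the complex-tangential derivatives of $C$ gain only $d_K^{-1/2}$ per derivative, so every first-order derivative of $H_E$ is $\lesssim d_K(z,E)^{1/2}$ or better. Hence $\nabla H_E$ is bounded, and it extends continuously to the closed ball, vanishing on $E$ and being smooth away from $E$.

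\textbf{Main obstacle.} I expect the hardest step to be the two-sided modulus estimate $\operatorname{Re}G=\log d_K(z,E)+O(1)$ uniformly up to the boundary. It requires a careful comparison of $\operatorname{Re}C$ with a Poisson kernel along the transversal curve for points $z$ far from $\Delta$ in the complex-tangential directions, where $d_K(z,\Delta)$ dominates. It also requires a verification that the Carleson condition controls the oscillation of $\log\delta$ at every scale. I would first try to reduce this to the one-variable case by writing $\langle z,\gamma(t)\rangle/2=\tfrac{z_1+z_2}{2}e^{-it}$. This exhibits $G$ as a one-variable outer-type function of $w=(z_1+z_2)/2\in\mathbb D$, so that $d_K(z,\zeta)=|1-w\bar\zeta_1|$. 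If this reduction works, the whole construction is just the classical Carleson-set outer function composed with $w$, and the estimates follow from the known disc estimates.
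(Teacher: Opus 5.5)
Your closing reduction is correct, and it would give a genuinely different and more transparent route than the paper's, which simply quotes Bruna--Ortega's construction for transversal curves and then dilates. With $w=\frac{z_1+z_2}{2}\in\overline{\mathbb D}$ one has $\langle z,\gamma(t)\rangle/2=we^{-it}$. Hence $d_K(z,E)=\operatorname{dist}(w,E^1)$, $d_K(z,\Delta)=1-|w|$, $C(z,t)=\frac{e^{it}+w}{2(e^{it}-w)}$ and $R\,[f(w)]=wf'(w)$. So $G=g\circ w$ with $g$ a one-variable Herglotz integral, and everything becomes a statement about $F$ in the disc. Your comparison of $\operatorname{Re}C$ with $(1-|z|^2/2)/d_K^2$ plus a bounded term is false off $\Delta$, but the exact identity $\operatorname{Re}C=\frac{1-|w|^2}{2|e^{it}-w|^2}$ replaces it. In this form the modulus estimate is the classical, easy part.

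The genuine gap is the derivative estimate, which is false for your boundary data $\log\operatorname{dist}(t,F)$. Take $E=\{(1,1),(-1,-1)\}$, so $F=\{0,\pi\}$. Then $\log\delta(\frac\pi2+s)=\log\delta(\frac\pi2)-a|s|+O(s^2)$ with $a>0$, and the conjugate function of the corner $-a|s|$ behaves like a nonzero multiple of $s\log|s|$. Consequently $|g'(ri)|\gtrsim\log\frac1{1-r}$ as $r\to1$, while $|e^{g(ri)}|\asymp1$. Hence $|RH_E(ri,ri)|\to\infty$, which violates the case $j=1$; even $Rh_E\in H^\infty(\mathbb D^2)$, which is what is used later, fails.

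Your dyadic argument breaks for the same reason. When $\rho:=1-|w|\ll D:=\operatorname{dist}(w,E^1)$, the kernel $R^jC$ has size up to $\rho^{-1-j}$ on the innermost arc; it is not $\asymp r^{-1}$ on $I_z$, as you wrote. On that arc $\log(\delta/D)$ is only $O(1)$, so absolute values give $\rho^{-j}$ rather than $D^{-j}$. The missing ingredient is smoothness of the data together with cancellation:
\begin{enumerate}
\item Replace $\delta$ by a regularized distance $\tilde\delta\asymp\delta$ that is smooth off $F$ with $|\tilde\delta^{(k)}|\lesssim_k\tilde\delta^{1-k}$.
\item After a cutoff at scale $D$ around the projection of $w$, move the derivatives onto the data using $w\partial_wK=i\partial_tK$ for the Herglotz kernel $K$.
\item Bound the resulting conjugate function of data that are smooth at scale $D$; the far dyadic arcs are then handled by the $\mathcal K$-condition as you propose.
\end{enumerate}
This cancellation step is exactly what the cited Bruna--Ortega construction supplies.

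Separately, your argument for $H_E\in A^1$ does not work, because a bounded gradient need not extend continuously. In fact, no $H$ with $|H|\asymp d_K(\cdot,E)$ can be $C^1$ up to the boundary once $E$ has an accumulation point $p=(\zeta,\zeta)$, e.g. whenever $\dim_{\mathrm H}E>0$. Indeed, $H=0$ on points of $E$ accumulating at $p$, which forces the derivative of $H$ along $\Delta$ at $p$, namely $i\zeta(\partial_1H+\partial_2H)(p)$, to vanish. Hence $RH(p)=0$ and $|H(rp)|=o(1-r)$, whereas $d_K(rp,E)=1-r$. Your claim that $\nabla H_E$ vanishes on $E$ contradicts the modulus estimate in exactly this way. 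What can be proved is that $H_E$ is Lipschitz on the closed ball with $H_E$ and $RH_E$ bounded, so aim for that rather than literal $A^1$. That is also all the paper uses afterwards: continuity on $\overline{\mathbb D^2}$ in Theorem~\ref{thm:cap_noncyclic}, and $h_E,Rh_E\in H^\infty(\mathbb D^2)$ in Proposition~\ref{prop:ball-restrictions-multipliers}.
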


\begin{proof} To see this, apply the construction of Bruna and Ortega
\cite[Theorem~4.3]{Bruna1986}, followed by the dilation
$z\mapsto z/\sqrt2$. The result for $h_E$ follows by restriction to
$\mathbb D^2$ (see also \cite[Proposition~1]{Pouriya_CriticalBall}).
\end{proof}

Fix a sufficiently small number $\delta>0$ and define the
complex-tangential thickening or enlargement of the set $E$ by
\[
    \widetilde E
    :=\left\{
       (e^{i(u+v)},e^{i(u-v)}):
       (e^{iu},e^{iu})\in E,\ -\delta\leq v\leq\delta
      \right\}.
\]

\begin{prop}\label{prop:ball-construction-Etilde}
Let $E\subset\Delta$ be a $\mathcal K$-set and let $\widetilde E$ be
defined as above. Then there exist $\ell>0$ and a function
$H_{\widetilde E}$, holomorphic on $\mathbb B_2(0,\sqrt2)$, such that
\[
    |H_{\widetilde E}(z)|
    \asymp d_K(z,\widetilde E)^\ell,
    \qquad z\in\overline{\mathbb B_2(0,\sqrt2)},
\]
and, for every integer $j\geq1$,
\[
    |R^jH_{\widetilde E}(z)|
    \lesssim d_K(z,\widetilde E)^{\ell-j},
    \qquad z\in\mathbb B_2(0,\sqrt2).
\]
In particular, if
$
    h_{\widetilde E}
    :=H_{\widetilde E}|_{\mathbb D^2},
$
then $h_{\widetilde E}$ is nonvanishing on $\mathbb D^2$ and
$
    \mathcal Z(h_{\widetilde E})\cap\mathbb T^2=\widetilde E.
$
\end{prop}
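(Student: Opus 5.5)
The statement to prove is Proposition~\ref{prop:ball-construction-Etilde}. I would reduce it to Proposition~\ref{prop:ball-construction-E} applied to a suitable $\mathcal K$-set. The plan has three steps: show that $\widetilde E$ is a $\mathcal K$-set, invoke the Bruna--Ortega construction, and then restrict to the bidisk.

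\textbf{Step 1: $\widetilde E$ is a $\mathcal K$-set.} The set $\widetilde E$ is the union of small arcs $v\mapsto (e^{i(u+v)},e^{i(u-v)})$, $|v|\le\delta$, through the points of $E$. These arcs are tangent to the complex tangential direction $(i\zeta_1,-i\zeta_2)$. Indeed, for $\zeta=(e^{iu},e^{iu})$ the vector $(i\zeta_1,-i\zeta_2)$ is orthogonal to $\zeta$ in the Hermitian sense.

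In the Korányi geometry of $\partial\mathbb B_2(0,\sqrt2)$, write $\zeta=(e^{i(u+v)},e^{i(u-v)})$ and $\eta=(e^{i(u'+v')},e^{i(u'-v')})$. Expanding $1-\langle\zeta,\eta\rangle/2=1-\tfrac12(e^{i(s+w)}+e^{i(s-w)})=1-e^{is}\cos w$, with $s=u-u'$ and $w=v-v'$, gives
\[
d_K(\zeta,\eta)\asymp |s|+w^2 .
\]
So a Korányi ball of radius $\varepsilon$ intersected with $\widetilde E$ looks like a product of an $\varepsilon$-interval in $u$ and an $\varepsilon^{1/2}$-interval in $v$.

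Consequently
\[
N_\varepsilon\bigl(\widetilde E\cap K(\zeta,r)\bigr)\lesssim N^{(1)}_\varepsilon\bigl(E\cap K(\zeta',Cr)\bigr)\cdot (r/\varepsilon)^{1/2},
\]
where $N^{(1)}$ is the covering number of $E$ along $\Delta$. The condition to verify is $\int_0^r N_\varepsilon\,d\varepsilon\lesssim r$. The factor $(r/\varepsilon)^{1/2}$ is integrable, but it interacts with the $\mathcal K$-set condition for $E$. I expect this to be the main obstacle, since the naive estimate $N^{(1)}_\varepsilon\lesssim r/\varepsilon$ fails the integrability test. I would first try to prove it by dyadic decomposition in $\varepsilon$, using the Bruna--Ortega characterization of $\mathcal K$-sets via $\sum_j N_{2^{-j}r}2^{-j}\lesssim 1$. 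If that is not enough, I would fall back on the weaker property actually needed for the construction with exponent $\ell$. This is the Bruna--Ortega version for sets satisfying a ``$\mathcal K$-set of order $\ell$'' condition, i.e. the Whitney-type integrability $\int d_K(\cdot,\widetilde E)^{-\gamma}\,d\sigma<\infty$ for some $\gamma>0$. It holds because $\widetilde E$ has Korányi-upper Minkowski dimension strictly below the homogeneous dimension $3$.

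\textbf{Step 2: the construction.} Once Step 1 is settled, apply \cite[Theorem~4.3]{Bruna1986} (or its power version, which produces $\ell>0$) to $\widetilde E$, followed by the dilation $z\mapsto z/\sqrt2$. This yields $H_{\widetilde E}$ with $|H_{\widetilde E}|\asymp d_K(\cdot,\widetilde E)^\ell$. The bounds $|R^jH_{\widetilde E}|\lesssim d_K^{\ell-j}$ follow as in Proposition~\ref{prop:ball-construction-E}, by Cauchy estimates on Korányi–admissible polydiscs of size comparable to $d_K(z,\widetilde E)$ in the radial direction.

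\textbf{Step 3: restriction to the bidisk.} Set $h_{\widetilde E}=H_{\widetilde E}|_{\mathbb D^2}$.

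For nonvanishing, note that $\mathbb D^2\subset\mathbb B_2(0,\sqrt2)$, so $|\langle z,\zeta\rangle|<2$ there and $d_K(z,\widetilde E)>0$.

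For the boundary zero set, a point $z\in\mathbb T^2$ has $d_K(z,\widetilde E)=0$ exactly when $\langle z,\zeta\rangle=2$ for some $\zeta\in\widetilde E$. Since $|z|=|\zeta|=\sqrt2$, this forces $z=\zeta$ by the equality case of Cauchy--Schwarz. Hence $\mathcal Z(h_{\widetilde E})\cap\mathbb T^2=\widetilde E$.
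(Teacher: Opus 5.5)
\textbf{The gap is Step 1, and it cannot be fixed by more careful bookkeeping.} Your own computation $d_K\asymp |s|+w^2$ yields a matching \emph{lower} bound on the covering numbers.

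Take $\zeta\in\widetilde E$ with $v$-coordinate in $(-\delta/2,\delta/2)$ and $r\le\delta^2/4$. Then $\widetilde E\cap K(\zeta,r)$ contains the product of $E^1\cap I$ (for an arc $I$ of length $cr$) with a $v$-interval of length $c\sqrt r$. On the other hand, every Korányi ball of radius $\varepsilon$ meets $\mathbb T^2$ inside a $(u,v)$-box of size $C\varepsilon\times C\sqrt\varepsilon$. Hence $N_\varepsilon(\widetilde E\cap K(\zeta,r))\gtrsim N^{(1)}_{C\varepsilon}(E^1\cap I)\,(r/\varepsilon)^{1/2}$.

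So $\widetilde E$ has Korányi dimension $\dim E+\tfrac12$, and $\int_0^r N_\varepsilon\,d\varepsilon=\infty$ as soon as $\dim_H E>1/2$. For constant-ratio Cantor sets this already happens at $\dim E=1/2$. Case 1 needs $\dim E=1-\beta_c$ arbitrary in $[0,1)$, so your route fails exactly for $\beta_c\le 1/2$. It does work, even with $\ell=1$ via \cite[Theorem~4.3]{Bruna1986}, when $E$ is a constant-ratio Cantor set of dimension $<1/2$.

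Your fallback is not an available theorem, and it cannot be one, for two reasons.
\begin{enumerate}
\item The exponent gives no extra room. If $H$ is nonvanishing with $|H|\asymp d_K^\ell$ and $|R^jH|\lesssim d_K^{\ell-j}$, then $H^{1/\ell}$ satisfies the same estimates with $\ell=1$.
\item The integrability condition $\int d_K(\cdot,\widetilde E)^{-\gamma}\,d\sigma<\infty$ also holds with $\widetilde E$ replaced by $\mathbb T^2$, for any $\gamma<1/2$, since $\sigma(\{d_K(\cdot,\mathbb T^2)<t\})\asymp t^{1/2}$. Yet no function holomorphic on $\mathbb B_2(0,\sqrt2)$ can satisfy $|H|\asymp d_K(\cdot,\mathbb T^2)^\ell$. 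Such an $H$ would vanish on $\mathbb T^2$, the distinguished boundary of $\mathbb D^2$, forcing $H\equiv 0$ on $\mathbb D^2$, while the estimate says $H\neq0$ inside.
\end{enumerate}
Incidentally, the homogeneous dimension of the sphere relative to $d_K$ is $2$, not $3$.

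\textbf{The missing idea} is to exploit that $\widetilde E$ is fibred by complex-tangential arcs over the transversal set $E$, rather than to treat it as a $\mathcal K$-set. Note, for instance, that $z_1z_2=e^{2iu}$ is constant along these arcs. This is the kind of set handled by the Chaumat--Chollet construction \cite[Proposition~17]{Chaumat_Chollet_Hausdorff}. The paper takes $H_{\widetilde E}(z)=\exp(-\psi(z/\sqrt2))$ with $\psi$ from \cite[Proposition~27]{Pouriya_CriticalBall}, which supplies both $|H_{\widetilde E}|\asymp d_K^\ell$ and the bounds on $R^jH_{\widetilde E}$.

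Relatedly, in your Step 2 the bounds $|R^jH|\lesssim d_K^{\ell-j}$ cannot be extracted from $|H|\asymp d_K^\ell$ by Cauchy estimates. A complex radial disc of radius comparable to $d_K(z,\widetilde E)$ leaves the ball when $z$ is much closer to the sphere than to $\widetilde E$. These bounds must come out of the construction itself, as they do in \cite{Bruna1986} for Proposition~\ref{prop:ball-construction-E}. Your Step 3, the restriction to the bidisk, is correct.
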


\begin{proof}
Here we may use the construction of Chaumat and Chollet in
\cite[Proposition~17]{Chaumat_Chollet_Hausdorff}, together with the refined
estimates in \cite[Proposition~27]{Pouriya_CriticalBall}. We may take
\[
    H_{\widetilde E}(z)
    :=\exp\left(-\psi\left(\frac{z}{\sqrt{2}}\right)\right),
\]
where $\psi$ is as in \cite[Proposition~27]{Pouriya_CriticalBall}.

\end{proof}

\begin{prop}\label{prop:ball-restrictions-multipliers}
Let $0<\beta\leq2$. If $\varphi,R\varphi\in
H^\infty(\mathbb D^2)$, then
$
    \varphi\in\mathcal M(\mathcal D_\beta(\mathbb D^2)).
$
In particular, the functions $h_E$ and $h_{\widetilde E}$ belong to
$\mathcal D_2(\mathbb D^2)$ and are multipliers of $\mathcal D_\beta(\mathbb D^2)$ for
every $0<\beta\leq2$.
\end{prop}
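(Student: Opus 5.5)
The plan is to verify the multiplier property through the norm equivalence \eqref{eq:radial-norm-equivalence} of Remark~\ref{rem:radial-derivative}. Let $u\in\mathcal D_\beta(\mathbb D^2)$. Then $\varphi u$ is holomorphic, and since $\varphi\in H^\infty(\mathbb D^2)$ we have $|(\varphi u)(0)|\le\|\varphi\|_\infty|u(0)|$. The Leibniz rule for the first-order operator $R$ gives
\[
    R(\varphi u)=(R\varphi)\,u+\varphi\,Ru .
\]
By \eqref{eq:radial-norm-equivalence} it therefore suffices to show that both terms on the right lie in $\mathcal D_{\beta-2}(\mathbb D^2)$, with norm bounded by a constant times $\|u\|_\beta$.

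The second term is handled by the fact, already used in the proof of Lemma~\ref{Perfekt}, that $H^\infty(\mathbb D^2)=\mathcal M(\mathcal D_{\beta-2}(\mathbb D^2))$ for $\beta-2\le0$. This can be checked directly for $\beta<2$. By Stirling, the weight $(k+l+1)^{\beta-2}$ makes $\mathcal D_{\beta-2}$ equivalent to the space with norm
\[
    \int_0^1\int_{\mathbb T^2}|g(\rho\zeta)|^2\,dm_2(\zeta)\,(1-\rho)^{1-\beta}\,d\rho,
\]
which is the same computation as in \eqref{eq: Integral Norm Defn}. In that form, multiplication by a bounded function is obviously bounded. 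When $\beta=2$ the space is $H^2(\mathbb D^2)$, where the claim is also clear. Consequently $\|\varphi Ru\|_{\beta-2}\lesssim\|\varphi\|_\infty\|Ru\|_{\beta-2}\lesssim\|\varphi\|_\infty\|u\|_\beta$.

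For the first term, $R\varphi\in H^\infty$ is a multiplier of $\mathcal D_{\beta-2}$, so $\|(R\varphi)u\|_{\beta-2}\lesssim\|R\varphi\|_\infty\|u\|_{\beta-2}$. Since $\beta-2\le\beta$, Remark~\ref{rem:monotonicity} gives $\|u\|_{\beta-2}\le\|u\|_\beta$. Combining the two estimates yields
\[
    \|\varphi u\|_\beta\lesssim\bigl(\|\varphi\|_\infty+\|R\varphi\|_\infty\bigr)\|u\|_\beta ,
\]
so $\varphi\in\mathcal M(\mathcal D_\beta(\mathbb D^2))$. The only point needing care is the identification of $\mathcal D_{\beta-2}$ with a weighted Bergman or Hardy space, which is routine.

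For the particular case it remains to check the two hypotheses for $h_E$ and $h_{\widetilde E}$. For $h_E$, Proposition~\ref{prop:ball-construction-E} gives $|H_E|\asymp d_K(\cdot,E)\lesssim1$, and with $j=1$ it gives $|RH_E|\lesssim d_K^{0}=1$. For $h_{\widetilde E}$, Proposition~\ref{prop:ball-construction-Etilde} gives $|RH_{\widetilde E}|\lesssim d_K^{\ell-1}$. This is bounded provided $\ell\ge1$, which we may arrange by replacing $\psi$ with a large multiple, or equivalently by taking a power of $H_{\widetilde E}$. The stated estimates persist under this change, since they are what the construction in \cite{Pouriya_CriticalBall} provides for any exponent.

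Restricting to $\mathbb D^2\subset\mathbb B_2(0,\sqrt2)$ requires two small checks. First, the radial derivative of the restriction is the restriction of the radial derivative, since $R$ is defined by the same formula on both domains. Second, $d_K$ is bounded on the closed ball. Hence $h_E,h_{\widetilde E}\in\mathcal M(\mathcal D_\beta)$ for all $0<\beta\le2$. Applying the multiplier property to $u=1$ shows that they belong to $\mathcal D_2(\mathbb D^2)$. I expect the main obstacle to be securing $\ell\ge1$ for $h_{\widetilde E}$, that is, ensuring the Chaumat–Chollet exponent can be taken large enough.
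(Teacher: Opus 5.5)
Your proof is correct and follows the paper's own argument: the Leibniz rule $R(\varphi u)=(R\varphi)u+\varphi Ru$, the fact that $H^\infty(\mathbb D^2)$ multiplies $\mathcal D_{\beta-2}(\mathbb D^2)$ for $\beta\le2$, the inclusion $\mathcal D_\beta\subset\mathcal D_{\beta-2}$, and the norm equivalence \eqref{eq:radial-norm-equivalence}. Your verification of the ``in particular'' clause, which the paper leaves implicit, is also sound; this includes your observation that $h_{\widetilde E}$ needs $\ell\ge1$, which can be arranged by passing to a power exactly as the paper does in Case~1.
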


\begin{proof}
For $u\in \mathcal D_\beta(\mathbb D^2)$, we have
$
    R(\varphi u)=(R\varphi)u+\varphi Ru.
$
For $\beta\leq0$, bounded holomorphic functions are multipliers of
$\mathcal D_\beta(\mathbb D^2)$. Indeed, this follows from the equivalent $\mathcal D_{\beta}$-norm involving the radial derivative, and from the
usual multiplier property of $H^\infty(\mathbb D^2)$ on
$H^2(\mathbb D^2)$ when $\beta=0$. Since
$\mathcal D_\beta(\mathbb D^2)\subset \mathcal D_{\beta-2}(\mathbb D^2)$, one has
\[
    R(\varphi u)\in \mathcal D_{\beta-2}(\mathbb D^2).
\]
Therefore, \eqref{eq:radial-norm-equivalence} gives
$\varphi\in\mathcal M(\mathcal D_\beta(\mathbb D^2))$.
\end{proof}

\subsection{Proof of Case 1:
\texorpdfstring{$0<\beta_c\leq1$}{0 < beta\_c <= 1}}
Set $d:=1-\beta_c$. Choose a $\mathcal K$-set
$E\subset\Delta$ of Hausdorff dimension $d$ such that
\begin{equation}
    |E_t|\asymp t^{1-d},
    \label{eq:case1-neighborhood-growth}
\end{equation}
where $E_t$ denotes the $t$-neighborhood of $E$ inside $\Delta$ and
$|\cdot|$ denotes one-dimensional Lebesgue measure. Such sets may be
obtained, for instance, from Cantor sets with constant dissection
ratio; see \cite{ELFALLAH2010_Cantor}. Let
$\widetilde E$ and
$h_{\widetilde E}$ be given by
Proposition~\ref{prop:ball-construction-Etilde}.

Denote by
$
    E^1:=\pi_1(E)
    =\{e^{it}:(e^{it},e^{it})\in E\}$
the projection of $E$ onto the first variable, and let $m_\delta$ be
the uniform probability measure on
$
    J_\delta:=\{e^{iv}:-\delta\leq v\leq\delta\}.
$
Define
$
    \Phi(e^{iu},e^{iv})
    :=(e^{i(u+v)},e^{i(u-v)}).
$

\begin{prop}\label{prop:case1-noncyclicity}
The function $h_{\widetilde E}$ is not cyclic in
$\mathcal D_\beta(\mathbb D^2)$ whenever
$
    \beta>\beta_c=1-d.
$
\end{prop}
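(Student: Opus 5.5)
We will derive noncyclicity from Theorem~\ref{thm:cap_noncyclic}. By Proposition~\ref{prop:ball-restrictions-multipliers}, $h_{\widetilde E}\in\mathcal D_2(\mathbb D^2)\subset\mathcal D_\beta(\mathbb D^2)$. Since $|h_{\widetilde E}(z)|\asymp d_K(z,\widetilde E)^\ell$ on the closed ball, $h_{\widetilde E}$ is continuous on $\overline{\mathbb D^2}$ and $\mathcal Z(h_{\widetilde E})\cap\mathbb T^2=\widetilde E$. It therefore suffices to show $\operatorname{cap}_\beta(\widetilde E)>0$ for $\beta_c<\beta\le 2$. For $\beta>2$ we instead use Lemma~\ref{lem:bounded-point-evaluations} and Lemma~\ref{lem:zero-at-bounded-point-evaluation}: every point of $\widetilde E\neq\varnothing$ is a bounded point evaluation at which $h_{\widetilde E}$ vanishes. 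By Remark~\ref{rem:monotonicity}, it even suffices to treat $\beta$ in a range $(\beta_c,\beta_c+\varepsilon)\cap(1,2]$. However, positive capacity is an increasing property in $\beta$, since the kernel gets more singular as $\beta$ increases. So we simply prove finite energy for every $\beta\in(\beta_c,2]$. When $\beta\le1$, we note the kernel $k_\beta$ is bounded on $\mathbb T^2\times\mathbb T^2$, so every measure has finite energy. Thus the real case is $\beta>1$, where Lemma~\ref{lem:kernel-estimates} gives $k_\beta(\zeta,\eta)\asymp|\zeta-\eta|^{\beta-2}$, or the logarithm when $\beta=2$.

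Next we build a test measure. Let $\nu$ be the natural Cantor (Frostman) measure on $E^1$, for which $\nu(I)\lesssim|I|^{d}$ for arcs $I$. This follows from the constant-dissection-ratio construction and \eqref{eq:case1-neighborhood-growth}. Set $\mu:=\Phi_*(\nu\otimes m_\delta)$, a probability measure supported on $\widetilde E$. The map $\Phi$ is a local bi-Lipschitz diffeomorphism of $\mathbb T^2$, since $(u,v)\mapsto(u+v,u-v)$ is linear and invertible. Hence, for $\delta$ small,
\[
I_\beta[\mu]\asymp\iint\!\!\iint \bigl(|u-u'|+|v-v'|\bigr)^{\beta-2}\,d\nu(u)\,d\nu(u')\,\frac{dv\,dv'}{(2\delta)^2},
\]
with $\log$ replacing the power when $\beta=2$.

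Integrating in $v,v'$ first gives $\int\!\!\int(|s|+|v-v'|)^{\beta-2}\,dv\,dv'\lesssim |s|^{\beta-1}$ plus a bounded term when $\beta<1$, but for $1<\beta<2$ the exponent $\beta-2>-1$ makes this integral bounded. That would make the energy finite for \emph{every} measure, contradicting the sharp threshold. I must therefore check the exponent carefully. In the ball picture the relevant scaling is Korányi: along $\widetilde E$ the $v$-direction is complex tangential. On $\mathbb T^2$, however, the kernel depends on the Euclidean distance. So the capacity computation is genuinely one-dimensional in $u$ only after integrating $v$: we get $\int(|s|+|w|)^{\beta-2}dw\asymp 1$ for $\beta>1$. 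This suggests that for $\beta\le 2$ and $\beta>1$ the energy is always finite. In that case the noncyclicity for $\beta>\beta_c$ follows a fortiori, since any positive-area set has positive capacity.

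Beyond this, the remaining work is the range $\beta_c<\beta\le1$. There we need positivity of capacity for a kernel which is bounded. Since $k_\beta$ is bounded and positive for $\beta<2$ by Lemma~\ref{lem:kernel-estimates} and Proposition~\ref{prop: kernel}, every nonempty set has positive capacity, and the conclusion holds as well. The main obstacle I expect is precisely this consistency check: confirming that Theorem~\ref{thm:cap_noncyclic} is applicable with this capacity, and that the $\beta=2$ logarithmic case and the continuity of $h_{\widetilde E}$ up to $\mathbb T^2$ hold. The estimate $|H_{\widetilde E}|\asymp d_K^\ell$ gives the latter.
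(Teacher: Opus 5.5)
There is a genuine gap, and it lies in the only range where the proposition has content, namely $\beta_c<\beta\le 1$. Your treatment of that range rests on the claim that $k_\beta$ is bounded on $\mathbb T^2\times\mathbb T^2$ for $\beta<2$, and this claim is false.

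The Taylor coefficients of the kernel are $c_m\asymp(m+1)^{-\beta}$, each occurring with multiplicity $m+1$. Hence $k_\beta(\zeta,\zeta)\asymp\sum_m(m+1)^{1-\beta}=\infty$ for every $\beta\le2$. For $\beta\le1$ the situation is worse. If $\zeta_2=\eta_2$, the integrand in Proposition~\ref{prop: kernel} equals $t^{-(2-\beta)}(1-\zeta_1\overline{\eta_1})^{-(2-\beta)}$, which is not integrable at $t=0$. So $k_\beta=\infty$ on the whole set $\{\zeta_1=\eta_1\}\cup\{\zeta_2=\eta_2\}$.

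Your claim also proves too much. If every nonempty set had positive $\beta$-capacity, Theorem~\ref{thm:cap_noncyclic} would make every function vanishing at one point of $\mathbb T^2$ noncyclic. That contradicts the cyclicity of $2-z_1-z_2$, and also Proposition~\ref{prop:case1-cyclicity} for $h_{\widetilde E}$ itself.

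Two further points go wrong in the setup. First, your justification of monotonicity is reversed. As $\beta$ increases, $c_m$ decays faster and the kernel becomes \emph{less} singular; that is why positive capacity propagates upward. Second, Remark~\ref{rem:monotonicity} does not let you restrict to $(\beta_c,\beta_c+\varepsilon)\cap(1,2]$. For $\beta_c<1$ and small $\varepsilon$ that set is empty. Noncyclicity only propagates upward from a value already proven, so you must handle $\beta$ arbitrarily close to $\beta_c$. Your real-space estimate for $1<\beta\le 2$ is fine, but it uses nothing about $d$. Lemma~\ref{lem:kernel-estimates} gives no isotropic bound $|\zeta-\eta|^{\beta-2}$ for $\beta\le1$, where the kernel is strongly anisotropic.

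What is missing is an energy estimate for $\mu=\Phi_*(\nu\otimes m_\delta)$ that actually uses $\dim_H E^1=d$. The paper works on the Fourier side:
\[
I_\beta[\mu]\asymp\sum_{m\ge0}(m+1)^{-\beta}\sum_{k=0}^m\Bigl|\int_{\mathbb T^2}\zeta_1^k\zeta_2^{m-k}\,d\mu(\zeta)\Bigr|^2 .
\]
For this product measure the moment factors as $\widehat\nu(m)\,\widehat m_\delta(2k-m)$. Since $\widehat m_\delta(j)=\sin(j\delta)/(j\delta)$ is square summable, $\sum_k|\widehat m_\delta(2k-m)|^2\le C_\delta$ uniformly in $m$. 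Therefore $I_\beta[\mu]\lesssim\sum_m|\widehat\nu(m)|^2(m+1)^{-\beta}$.

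This is a one-variable energy of $\nu$, of Riesz order $1-\beta$ when $\beta<1$. It is finite for a suitable $\nu$ on $E^1$ precisely because $\beta>1-d$. Averaging in the complex-tangential $v$-direction removes the factor $m+1$ from the inner sum, and this is what lowers the threshold from $2-d$ to $1-d$. Your argument has no substitute for this step.
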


\begin{proof}
Fix
$
    \beta_c<\beta <2.
$
As $\dim_{\mathrm H}E^1=d$ and $\beta>1-d$, the capacity criterion in
\cite[Section~1 and Theorem~4.3]{ELFALLAH2010_Cantor} provides a
probability measure $\nu$ supported on $E^1$ such that
\[
    \sum_{m\geq0}
    \frac{|\widehat\nu(m)|^2}{(m+1)^\beta}<\infty.
\]

Let
$
    \mu=\Phi_*(\nu\otimes m_\delta)
    \in\mathcal P(\widetilde E).
$
denote the push-forward measure. 
For $0\leq k\leq m$, we have
\[
    \int_{\mathbb T^2}
    \zeta_1^k\zeta_2^{m-k}\,d\mu(\zeta)
    =
    \widehat\nu(m)\widehat m_\delta(2k-m).
\]
Moreover,
$
    \widehat m_\delta(j)
    =\frac{\sin(j\delta)}{j\delta}$, $j\neq0,
$
and $\widehat m_\delta(0)=1$. It follows that
\[
\begin{aligned}
    \sum_{k=0}^m
    \left|\widehat m_\delta(2k-m)\right|^2
    \lesssim
    1+\frac{1}{\delta^2}\sum_{j\geq1}\frac{1}{j^2}
    =:C_\delta.
\end{aligned}
\]
Consequently,
\[
\begin{aligned}
    I_\beta[\mu]
    &\asymp
    \sum_{m\geq0}\frac{1}{(m+1)^\beta}
    \sum_{k=0}^m
    \left|
    \int_{\mathbb T^2}
    \zeta_1^k\zeta_2^{m-k}\,d\mu(\zeta)
    \right|^2 \\
    &=
    \sum_{m\geq0}
    \frac{|\widehat\nu(m)|^2}{(m+1)^\beta}
    \sum_{k=0}^m
    \left|\widehat m_\delta(2k-m)\right|^2 \\
    &\lesssim
    \sum_{m\geq0}
    \frac{|\widehat\nu(m)|^2}{(m+1)^\beta}
    <\infty.
\end{aligned}
\]
Thus
$
    \operatorname{cap}_\beta(\widetilde E)>0,
$
and Theorem~\ref{thm:cap_noncyclic} shows that
$h_{\widetilde E}$ is not cyclic in $\mathcal D_\beta(\mathbb D^2)$. Since $\beta\in(\beta_c,2)$ was arbitrary,
Remark~\ref{rem:monotonicity} gives the conclusion for every
$\beta>\beta_c$.
\end{proof}

\begin{prop}\label{prop:case1-cyclicity}
The function $h_{\widetilde E}$ is cyclic in
$\mathcal D_\beta(\mathbb D^2)$ whenever
$
    \beta\leq\beta_c=1-d.
$
\end{prop}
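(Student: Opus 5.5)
By Remark~\ref{rem:monotonicity} it suffices to prove cyclicity for $\beta=\beta_c=1-d\in(0,1]$. I would use the radial dilation criterion, Theorem~\ref{Radial_Dilation}, and aim to show
\[
    \sup_{0<r<1}\bigl\|h_{\widetilde E}/(h_{\widetilde E})_r\bigr\|_{\beta_c}<\infty .
\]
The function $h_{\widetilde E}$ is nonvanishing on $\mathbb D^2$ and lies in $\mathcal D_2(\mathbb D^2)\subset\mathcal D_{\beta_c}(\mathbb D^2)$ by Propositions~\ref{prop:ball-construction-Etilde} and~\ref{prop:ball-restrictions-multipliers}, so the criterion applies.

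\textbf{Pointwise bounds.} Write $g_r:=h_{\widetilde E}/(h_{\widetilde E})_r$ and $\rho(z):=d_K(z,\widetilde E)$. The geometric input is the Kor\'anyi inequality $\rho(rz)\gtrsim \rho(z)+(1-r)$ for $z\in\overline{\mathbb D^2}$, using that $\mathbb D^2\subset\mathbb B_2(0,\sqrt2)$. Combined with $|H_{\widetilde E}|\asymp\rho^\ell$, this gives $|g_r|\lesssim 1$ uniformly in $r$. Next, by the quotient rule,
\[
    Rg_r=\frac{Rh_{\widetilde E}}{(h_{\widetilde E})_r}-\frac{h_{\widetilde E}\,(Rh_{\widetilde E})_r}{(h_{\widetilde E})_r^{2}},
\]
and the derivative estimates of Proposition~\ref{prop:ball-construction-Etilde} then give
\[
    |Rg_r(z)|\lesssim \frac{1}{\rho(z)+(1-r)}\lesssim\frac1{\rho(z)} .
\]
In particular $|Rg_r(\rho'\zeta)|\lesssim (1-\rho'+d_K(\zeta,\widetilde E))^{-1}$ on the torus slices.

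\textbf{Norm estimate.} Insert these bounds into the integral norm \eqref{eq: Integral Norm Defn} with $\beta=\beta_c$. Up to the constant term, this reduces to bounding
\[
    \int_0^1\int_{\mathbb T^2}\frac{dm_2(\zeta)}{\bigl(1-s+d_K(\zeta,\widetilde E)\bigr)^2}\,(1-s)^{1-\beta_c}\,ds .
\]
To handle the inner integral, parametrize $\mathbb T^2$ by $\Phi$. Near the diagonal, $d_K(\Phi(u,v),\widetilde E)$ is comparable to $\operatorname{dist}(u,E^1)$ plus a quadratic term in the $v$-distance outside $[-\delta,\delta]$, since the $v$-direction is complex-tangential. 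The $v$-integral therefore contributes only a bounded factor, and by \eqref{eq:case1-neighborhood-growth} the $u$-integral is controlled by
\[
    \int\frac{d|E_t|}{(1-s+t)^2}\asymp (1-s)^{-1-d}.
\]
The radial integral then becomes
\[
    \int_0^1 (1-s)^{1-\beta_c-1-d}\,ds=\int_0^1(1-s)^{-2(1-\beta_c)}\,ds\,\cdot\,(\dots),
\]
which diverges when $d>0$.

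\textbf{Main obstacle.} This divergence shows that the crude bound on $Rg_r$ is too lossy. The main obstacle is therefore to exploit the cutoff at scale $1-r$. The sharper bound I would aim for is
\[
    |Rg_r(z)|\lesssim\frac{1-r}{\rho(z)\,\bigl(\rho(z)+(1-r)\bigr)},
\]
obtained from $h-h_r=\int_r^1\frac{d}{dt}h(tz)\,dt$ together with $|R^2H|\lesssim\rho^{\ell-2}$, that is, from the cancellation in $Rh/h_r-h(Rh)_r/h_r^2$. The quadratic decay of this bound makes the $t$-integral over $(1-r,\infty)$ harmless. One then splits into the regions $\rho<1-r$ and $\rho>1-r$. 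On the first region we have $|Rg_r|\lesssim1/\rho$ but the set is small, and on the second region the bound above gives summable decay. The calculation should balance exactly at $\beta_c=1-d$, with the choice of $\ell$ large from Chaumat--Chollet absorbing the boundary terms, as in the ball case of \cite{Pouriya_CriticalBall}. If this splitting comes out only borderline, I would fall back on Lemma~\ref{Perfekt}: show that $h_{\widetilde E}^{N}$ is cyclic via the radial dilation argument, and then use $h_{\widetilde E}^{2}\in[h_{\widetilde E}]$ to pass back to $h_{\widetilde E}$ itself.
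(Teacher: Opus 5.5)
Your strategy is the paper's. You reduce to $\beta=\beta_c$, apply Theorem~\ref{Radial_Dilation}, and use $d_K(rz,\widetilde E)\gtrsim d_K(z,\widetilde E)+(1-r)$. You then integrate $Rh$ and $R^2h$ along the segment from $rz$ to $z$ to gain a factor $1-r$, and close with the integral norm and \eqref{eq:case1-neighborhood-growth}.

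However, the way you propose to close the estimate contains a step that fails. On $\{\rho<1-r\}$ you plan to use $|Rg_r|\lesssim 1/\rho$ together with the smallness of the set. That bound is not integrable there:
\begin{itemize}
\item on the slice $z=s\zeta$ one has $\rho(s\zeta)\le(1-s)+d_K(\zeta,\widetilde E)$;
\item $m_2\{d_K(\cdot,\widetilde E)<t\}\gtrsim t^{1-d}$;
\item hence the piece $\{x:=1-s<(1-r)/2,\ d_K(\zeta,\widetilde E)<x\}$ alone contributes $\gtrsim\int_0^{(1-r)/2}x^{d}\,x^{-2}\,x^{1-d}\,dx=\infty$.
\end{itemize}
You must keep the cutoff you already derived, $|Rg_r|\lesssim(\rho+1-r)^{-1}\le(1-r)^{-1}$, on that region. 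Its weighted measure is $\lesssim(1-r)^{1+d}(1-r)^{1-d}=(1-r)^2$, so with the cutoff the split does close.

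Simpler still, the segment argument gives the paper's bound directly:
\[
|R(h/h_r)|(z)\lesssim(1-r)\,d_K(rz,\widetilde E)^{-2}\asymp(1-r)\bigl(\rho(z)+1-r\bigr)^{-2}.
\]
This holds because every factor $\rho(z)$ in the numerators is $\lesssim\rho(rz)$. Your form $\frac{1-r}{\rho(\rho+1-r)}$ discards this. With the correct bound no splitting is needed: $\|h/h_r\|_{\beta_c}^2\lesssim 1+(1-r)^2\int_0^1 x^d(1-r+x)^{-3-d}\,dx\lesssim 1$.

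Your crude computation also has a slip. The exponent is $d-(1+d)=-1$, so the crude bound fails logarithmically for every $d$, not only when $d>0$ as you claim.

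Second, passing to large $\ell$ is not an optional fallback. Proposition~\ref{prop:ball-construction-Etilde} only provides some $\ell>0$. For $\ell<1$, the estimate $|RH|\lesssim d_K^{\ell-1}$ does not even give your crude bound. The estimate $|Rh-R(h_r)|\lesssim(1-r)\,d_K(rz,\widetilde E)^{\ell-2}$, derived from $|R^2H|\lesssim d_K^{\ell-2}$, requires $\ell\ge2$. You cannot choose $\ell$ for the given $h_{\widetilde E}$. As in the paper, run the argument for $h_{\widetilde E}^N$ with $N\ell>2$ and then use $[h_{\widetilde E}^N]\subset[h_{\widetilde E}]$. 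Invoking $h^2\in[h]$ via Lemma~\ref{Perfekt} only covers $N=2$ unless you iterate it. What is really used is that $[h_{\widetilde E}]$ is invariant under the multiplier $h_{\widetilde E}^{N-1}$.
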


\begin{proof}
It is enough to prove cyclicity for $\beta=\beta_c$. Choose an integer
$N$ such that $N\ell>2$. Since
$[h_{\widetilde E}^N]\subset[h_{\widetilde E}],$
it is enough to prove that $h_{\widetilde E}^N$ is cyclic. Set
$h:=h_{\widetilde E}^N$ and $h_r(z):=h(rz)$. The estimates of
Proposition~\ref{prop:ball-construction-Etilde} remain valid with
$\ell$ replaced by $N\ell$, so we may assume that $\ell>2$.

For $0<r\leq1/2$, the functions $1/h_r$ and their first radial
derivatives are uniformly bounded. We may therefore assume that
$1/2<r<1$.

For $z\in\overline{\mathbb D^2}$ and $\eta\in\widetilde E$, set
$w:=\langle z,\eta\rangle/2$. Since $|w|\leq1$, we have
\[
\begin{aligned}
    |1-rw|^2
    &=(1-r)^2+2r(1-r)\operatorname{Re}(1-w)
      +r^2|1-w|^2 \\
    &\geq r^2|1-w|^2.
\end{aligned}
\]
Hence, for $1/2<r<1$,
\[
    d_K(rz,\widetilde E)
    \geq\frac12d_K(z,\widetilde E),
    \qquad
    1-r\leq d_K(rz,\widetilde E).
\]
Hence,
\[
    d_K(z,\widetilde E)+(1-r)
    \lesssim d_K(rz,\widetilde E).
\]
Integrating
the estimates of Proposition~\ref{prop:ball-construction-Etilde}
along the line segment connecting $rz$ to $z$ gives
\[
\begin{aligned}
    |h-h_r|(z)
    &\lesssim
    (1-r)d_K(rz,\widetilde E)^{\ell-1},\\
    |Rh-Rh_r|(z)
    &\lesssim
    (1-r)d_K(rz,\widetilde E)^{\ell-2}.
\end{aligned}
\]
Consequently,
\begin{equation}
\begin{aligned}
    \left|R\left(\frac{h}{h_r}\right)\right|(z)
    &=
    \left|
    \frac{(h_r-h)Rh+h(Rh-Rh_r)}{h_r^2}
    \right|(z) \\
    &\lesssim
    \frac{1-r}{d_K(rz,\widetilde E)^2}.
\end{aligned}
\label{eq:case1-quotient-derivative}
\end{equation}

Since $\beta_c-2=-1-d<0$, it follows from
\eqref{eq: Integral Norm Defn} and
\eqref{eq:case1-quotient-derivative} that
\begin{equation}
\left\|\frac{h}{h_r}\right\|_{\beta_c}^2
\lesssim
1+(1-r)^2
\int_0^1(1-\rho)^d
\int_{\mathbb T^2}
\frac{dm_2(\zeta)}
     {d_K(r\rho\zeta,\widetilde E)^4}\,d\rho,
\label{eq:case1-radial-integral}
\end{equation}
where $m_2$ denotes normalized Lebesgue measure on $\mathbb T^2$.

Write
$
    \zeta=(e^{i(u+v)},e^{i(u-v)}).
$
If
$
    \eta=(e^{i(s+q)},e^{i(s-q)})\in\widetilde E,
$
then
\[
    \frac{\langle r\rho\zeta,\eta\rangle}{2}
    =r\rho e^{i(u-s)}\cos(v-q).
\]
It follows that
\[
    d_K(r\rho\zeta,\widetilde E)
    \gtrsim
    (1-r)+(1-\rho)+\operatorname{dist}(e^{iu},E^1).
\]
Setting $b:=(1-r)+(1-\rho)$, and letting $m$ denote normalized
Lebesgue measure on $\mathbb T$, we obtain
\[
    \int_{\mathbb T^2}
    \frac{dm_2(\zeta)}
         {d_K(r\rho\zeta,\widetilde E)^4}
    \lesssim
    \int_{\mathbb T}
    \frac{dm(\xi)}
         {(b+\operatorname{dist}(\xi,E^1))^4}.
\]
Applying \cite[Exercise~9.4.1]{ElFallah2014_book} to
$\varphi(t)=(b+t)^{-4}$, and using
\eqref{eq:case1-neighborhood-growth}, we obtain
\[
\begin{aligned}
    \int_{\mathbb T}
    \frac{dm(\xi)}
         {(b+\operatorname{dist}(\xi,E^1))^4}
    &\lesssim
    1+\int_0^1
    \frac{|(E^1)_t|}{(b+t)^5}\,dt \\
    &\lesssim b^{-3-d}.
\end{aligned}
\]
Substituting this estimate into
\eqref{eq:case1-radial-integral}, and setting $x=1-\rho$, we obtain
\[
\begin{aligned}
    \left\|\frac{h}{h_r}\right\|_{\beta_c}^2
    &\lesssim
    1+(1-r)^2
    \int_0^1
    \frac{x^d}{(1-r+x)^{3+d}}\,dx \\
    &\lesssim1.
\end{aligned}
\]
Theorem~\ref{Radial_Dilation} shows that $h_{\widetilde E}^N$ is
cyclic in $\mathcal D_{\beta_c}(\mathbb D^2)$. Hence $h_{\widetilde E}$ is cyclic in
$\mathcal D_{\beta_c}(\mathbb D^2)$, and
Remark~\ref{rem:monotonicity} gives cyclicity in
$\mathcal D_\beta(\mathbb D^2)$ for every $\beta<\beta_c$.
\end{proof}

\subsection[Case 2]{Proof of Case 2:
\texorpdfstring{$1<\beta_c\leq2$}{1 < beta\_c <= 2}}
Set $d:=2-\beta_c\in[0,1)$. Choose a nonempty $\mathcal K$-set
$E\subset\Delta$ of Hausdorff dimension $d$. Write
\[
    E^1:=\pi_1(E)
    =\{\zeta\in\mathbb T:(\zeta,\zeta)\in E\}
\]
and
\[
    E^1_t
    :=\{\zeta\in\mathbb T:
          \operatorname{dist}(\zeta,E^1)\leq t\}.
\]
As in Case 1, we may choose $E$ so that
\begin{equation}
    |E^1_t|\lesssim t^{1-d}.
    \label{eq:case2-neighborhood-growth}
\end{equation}
Let $h_E$ be as in
Proposition~\ref{prop:ball-construction-E}.

\begin{prop}\label{prop:case2-noncyclicity}
The function $h_E$ is not cyclic in $\mathcal D_\beta(\mathbb D^2)$ whenever
$ \beta>\beta_c=2-d.$
\end{prop}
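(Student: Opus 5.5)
Since cyclicity is inherited by larger spaces (Remark~\ref{rem:monotonicity}), noncyclicity propagates to smaller ones: if $h_E$ is noncyclic in $\mathcal D_{\beta_0}(\mathbb D^2)$, it is noncyclic in every $\mathcal D_\beta(\mathbb D^2)$ with $\beta\geq\beta_0$. Hence it suffices to treat $\beta\in(\beta_c,2]$, which is nonempty only when $d>0$, together with $\beta>2$.

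\textbf{The range $\beta>2$.} The set $E$ is nonempty, so $h_E$ vanishes at some point $z_0\in E\subset\mathbb T^2$. Lemma~\ref{lem:bounded-point-evaluations} says $z_0$ is a bounded point evaluation for $\mathcal D_\beta(\mathbb D^2)$. Lemma~\ref{lem:zero-at-bounded-point-evaluation} then gives noncyclicity immediately. This also settles the case $d=0$, where $\beta_c=2$.

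\textbf{The range $\beta_c<\beta\leq 2$ with $d>0$.} Here I would apply Theorem~\ref{thm:cap_noncyclic}. Its hypotheses hold: $h_E\in A^1\subset\mathcal C(\overline{\mathbb D^2})$ and $h_E\in\mathcal D_2(\mathbb D^2)\subset\mathcal D_\beta(\mathbb D^2)$ by Proposition~\ref{prop:ball-restrictions-multipliers}. It remains to show $\operatorname{cap}_\beta(E)>0$, i.e.\ to exhibit a probability measure on $E$ with finite $\beta$-energy.
\begin{itemize}
\item Take $\nu$ to be the natural Cantor (Frostman) measure on $E^1$. Because $E$ comes from a constant-ratio Cantor construction, $\nu(I)\lesssim|I|^{d}$ for every arc $I$.
\item Push it forward to $\mu:=\gamma_*\nu$ on the diagonal, with $\gamma(t)=(e^{it},e^{it})$.
\item For $\zeta=\gamma(s)$ and $\eta=\gamma(t)$ we have $|\zeta-\eta|\asymp|e^{is}-e^{it}|$. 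By Lemma~\ref{lem:kernel-estimates}, for $1<\beta<2$,
\[
I_\beta[\mu]\asymp\iint\frac{d\nu(\xi)\,d\nu(\xi')}{|\xi-\xi'|^{2-\beta}}.
\]
This is a one-dimensional Riesz energy of exponent $2-\beta<d$, which is finite by the Frostman bound.
\item For $\beta=2$ the kernel is logarithmic and the energy is finite for the same reason.
\end{itemize}
Alternatively, one can argue as in Proposition~\ref{prop:case1-noncyclicity} via Fourier coefficients. On the diagonal, $\int\zeta_1^k\zeta_2^{m-k}d\mu=\widehat\nu(m)$ for each $0\le k\le m$, so
\[
I_\beta[\mu]\asymp\sum_m (m+1)^{1-\beta}|\widehat\nu(m)|^2.
\]
This is the one-variable $(\beta-1)$-energy of $\nu$, finite since $\beta-1>1-d$, i.e.\ $\operatorname{cap}_{\beta-1}(E^1)>0$ by \cite{ELFALLAH2010_Cantor}. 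This explains why the exponent shifts by one relative to Case~1.

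\textbf{Main obstacle.} The only delicate point is the energy comparison: the kernel in Lemma~\ref{lem:kernel-estimates} is stated on $\mathbb T^2$ with the Euclidean distance, and we must check that restricted to the diagonal it reduces to the one-variable Riesz kernel with the shifted exponent. The restriction $|\zeta-\eta|\asymp|e^{is}-e^{it}|$ settles this. Alternatively, the coefficient-sum identity above (counting $m+1$ terms, which cancels one power of $(m+1)$ in the weight $(m+1)^{-\beta}$) makes it explicit.
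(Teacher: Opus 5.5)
Your proof is correct and follows the paper's route: a Frostman-type measure on $E$, the kernel estimate of Lemma~\ref{lem:kernel-estimates} to get $\operatorname{cap}_\beta(E)>0$, Theorem~\ref{thm:cap_noncyclic}, bounded point evaluations for $\beta>2$, and monotonicity. You treat $\beta=2$ and all $\beta>2$ directly, whereas the paper reaches these by monotonicity when $\beta_c<2$. The one thing to tighten is the bound $\nu(I)\lesssim|I|^d$: it relies on $E$ being a constant-ratio Cantor set, while the paper only uses $\dim_{\mathrm H}E=d$ and applies Frostman's lemma with an exponent $s\in(2-\beta,d)$, which is enough for your Riesz-energy estimate.
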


\begin{proof}
Assume first that $\beta_c<2$, and fix
$\beta_c<\beta<2.$
Since $d=2-\beta_c>2-\beta$, choose $s$ such that
$
    2-\beta<s<d.
$
By Frostman's lemma
\cite[Theorem~8.9]{Mattila1995}, there exists a probability measure $\mu$
supported on $E$ such that
\[
    \mu(B(\zeta,t))\lesssim t^s,
    \qquad \zeta\in E,\quad 0<t\leq1,
\]
where the balls are taken with respect to the Euclidean distance.

By the definition of the energy and
Lemma~\ref{lem:kernel-estimates}, we have
\[
\begin{aligned}
    I_\beta[\mu]
    &=
    \iint_{E\times E}
    k_\beta(\zeta,\eta)\,
    d\mu(\zeta)\,d\mu(\eta)
     \\
    &\leq
    \iint_{E\times E}
    |k_\beta(\zeta,\eta)|\,
    d\mu(\zeta)\,d\mu(\eta) \\
    &\lesssim
    \iint_{E\times E}
    \frac{d\mu(\zeta)\,d\mu(\eta)}
         {|\zeta-\eta|^{2-\beta}}.
\end{aligned}
\]
A straightforward application of Fubini's theorem and the layer-cake formula
\cite[p.~26]{LiebLoss} gives
\[
\begin{aligned}
    I_\beta[\mu]
    &\lesssim
    1+\int_E\int_0^1
    \frac{\mu(B(\zeta,t))}{t^{3-\beta}}\,
    dt\,d\mu(\zeta) \\
    &\lesssim
    1+\int_0^1t^{s+\beta-3}\,dt
    <\infty,
\end{aligned}
\]
because $s>2-\beta$. Thus
$\operatorname{cap}_\beta(E)>0,
$
and Theorem~\ref{thm:cap_noncyclic} shows that $h_E$ is not cyclic in
$\mathcal D_\beta(\mathbb D^2)$. Since $\beta\in(\beta_c,2)$ was arbitrary,
Remark~\ref{rem:monotonicity} gives the conclusion for every
$\beta>\beta_c$.

It remains to consider $\beta_c=2$. Let $\beta>2$. Choose $\zeta\in E$. By
Lemma~\ref{lem:bounded-point-evaluations}, point evaluation at
$\zeta$ is bounded on $\mathcal D_\beta(\mathbb D^2)$. Since $h_E(\zeta)=0$,
Lemma~\ref{lem:zero-at-bounded-point-evaluation} shows that $h_E$ is not
cyclic.
\end{proof}

The following lemma is important in the proof of our main result for $\beta_c\in(1,2].$
\begin{lem}\label{lem:diagonal-lifting}
For $u\in\mathcal O(\mathbb D)$, define
\[
    \mathcal Tu(z_1,z_2)
    :=\int_0^1u(tz_1+(1-t)z_2)\,dt.
\]
Then
$
    \|\mathcal Tu\|_\beta
    =\|u\|_{D_{\beta-1}(\mathbb D)}
$
for every $\beta\in\mathbb R$. Moreover,
$
    \|\mathcal Tu\|_{H^\infty(\mathbb D^2)}
    \leq\|u\|_{H^\infty(\mathbb D)}.
$
\end{lem}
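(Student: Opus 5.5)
The proof is a direct coefficient computation: expand $u$ in its Taylor series, apply $\mathcal T$ term by term, and compare the resulting double series with the definition of $\|\cdot\|_\beta$. The $H^\infty$ bound will follow from convexity of the disc.

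Write $u(w)=\sum_{m\geq0}a_mw^m$. Fix $(z_1,z_2)\in\mathbb D^2$. For $t\in[0,1]$ we have
\[
    |tz_1+(1-t)z_2|\leq\max\{|z_1|,|z_2|\}<1 .
\]
On a compact subset of $\mathbb D^2$ these points therefore stay in a fixed compact subset of $\mathbb D$. Hence the Taylor series of $u$ converges uniformly in $t$ and $(z_1,z_2)$ there, and we may integrate term by term. Using the identity already employed in the proof of Proposition~\ref{prop: kernel},
\[
    \int_0^1\bigl(tz_1+(1-t)z_2\bigr)^m\,dt
    =\frac{1}{m+1}\sum_{j=0}^{m}z_1^jz_2^{m-j}.
\]
This gives
\[
    \mathcal Tu(z_1,z_2)=\sum_{m\geq0}\frac{a_m}{m+1}\sum_{j=0}^{m}z_1^jz_2^{m-j}.
\]
Thus $\mathcal Tu\in\mathcal O(\mathbb D^2)$, and its Taylor coefficient at $z_1^kz_2^l$ equals $a_{k+l}/(k+l+1)$.

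Next group the terms of the norm by total degree $m=k+l$. Each degree $m$ contains exactly $m+1$ monomials, all carrying the same coefficient and the same weight $(m+1)^\beta$. Therefore
\[
    \|\mathcal Tu\|_\beta^2
    =\sum_{m\geq0}(m+1)\,\frac{|a_m|^2}{(m+1)^2}\,(m+1)^\beta
    =\sum_{m\geq0}|a_m|^2(m+1)^{\beta-1}
    =\|u\|^2_{D_{\beta-1}(\mathbb D)}.
\]
This identity of series with nonnegative terms holds for every $\beta\in\mathbb R$, including the case where both sides are infinite.

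For the sup-norm bound, the point $tz_1+(1-t)z_2$ lies in $\mathbb D$ by convexity. Hence $|u(tz_1+(1-t)z_2)|\leq\|u\|_{H^\infty(\mathbb D)}$ for all $t$, and integrating over $t\in[0,1]$ yields $\|\mathcal Tu\|_{H^\infty(\mathbb D^2)}\leq\|u\|_{H^\infty(\mathbb D)}$.

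The only point requiring care is justifying the term-by-term integration, which is handled by the locally uniform convergence noted at the start; everything else is bookkeeping.
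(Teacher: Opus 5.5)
Your proposal is correct and follows the paper's proof essentially verbatim: Taylor-expand $u$, use $\int_0^1 (tz_1+(1-t)z_2)^m\,dt=\frac{1}{m+1}\sum_{j=0}^m z_1^jz_2^{m-j}$ to read off the coefficients $a_{k+l}/(k+l+1)$, sum over the $m+1$ monomials of each total degree, and get the $H^\infty$ bound directly from the definition of $\mathcal T$. Your explicit justification of term-by-term integration, via uniform convergence on $|w|\leq\max\{|z_1|,|z_2|\}<1$, is a detail the paper leaves implicit.
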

\begin{rem} The critical index in the two-dimensional case is shifted by one unit compared to the one-dimensional case. This lifting lemma illustrates the reason behind this interesting phenomenon.
\end{rem}
\begin{proof}
Write $u(w)=\sum_{n\geq0}a_nw^n$. Since
\[
\begin{aligned}
    \int_0^1(tz_1+(1-t)z_2)^n\,dt
    &=\sum_{k=0}^n\binom nk z_1^kz_2^{n-k}
      \int_0^1t^k(1-t)^{n-k}\,dt \\
    &=\frac{1}{n+1}\sum_{k=0}^nz_1^kz_2^{n-k},
\end{aligned}
\]
we obtain
\[
    \mathcal Tu(z_1,z_2)
    =\sum_{n\geq0}\frac{a_n}{n+1}
      \sum_{k=0}^nz_1^kz_2^{n-k}.
\]
Consequently,
\[
\begin{aligned}
    \|\mathcal Tu\|_\beta^2
    &=\sum_{n\geq0}\sum_{k=0}^n
      \frac{|a_n|^2}{(n+1)^2}(n+1)^\beta \\
    &=\sum_{n\geq0}|a_n|^2(n+1)^{\beta-1}\\
    &=\|u\|_{D_{\beta-1}(\mathbb D)}^2.
\end{aligned}
\]
The $H^\infty$ estimate follows directly from the definition of
$\mathcal T$.
\end{proof}

The following lemma is inspired by \cite[Lemma~9.3.6]{ElFallah2014_book}.

\begin{lem}\label{lem:one-variable-deformation}
Let $\beta_c$, $d$, $E$, and $E^1$ be as above, and let $h_E$ be
the function given by Proposition~\ref{prop:ball-construction-E}. Set
$\widetilde h(w):=h_E(w,w)$ and $\rho_E(w):=\operatorname{dist}(w,E^1),$ for $w\in\mathbb{D}.$
Then $\widetilde h$ admits a holomorphic logarithm on $\mathbb D.$
Fix such a logarithm and define
\[
    \widetilde h^\lambda
    :=\exp\bigl(\lambda\log\widetilde h\bigr),
    \qquad \lambda\in\mathbb C.
\]
There exist $0<\theta<\pi/2$ and $c_\Omega>0$ such that, for
$\Omega:=\{\lambda\in\mathbb C\setminus\{0\}:|\arg\lambda|<\theta\},$
the following assertions hold:
\begin{enumerate}
    \item For every $\lambda\in\Omega$,
    $
        \widetilde h^\lambda\in D_{1-d}(\mathbb D)
    $
    and
    \[
        |\widetilde h^\lambda(w)|
        \lesssim_\lambda
        \rho_E(w)^{c_\Omega|\lambda|},
        \qquad w\in\mathbb D.
    \]

    \item The map
    $
        \lambda\longmapsto\widetilde h^\lambda
    $
    is holomorphic from $\Omega$ into $D_{1-d}(\mathbb D)$.

    \item We have
    $
        \lim_{\Omega\ni\lambda\to0}
        \|\widetilde h^\lambda-1\|_{D_{1-d}(\mathbb D)}
        =0.
    $
\end{enumerate}
\end{lem}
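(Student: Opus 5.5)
The plan is to transfer the Bruna--Ortega estimates from Proposition~\ref{prop:ball-construction-E} to the one-variable function $\widetilde h(w)=h_E(w,w)$, and then to run the argument of \cite[Lemma~9.3.6]{ElFallah2014_book}.

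\textbf{Step 1: restriction of the estimates.} For $w\in\overline{\mathbb D}$ and $\eta=(\zeta,\zeta)\in E$ we have
\[
d_K((w,w),\eta)=\left|1-w\overline{\zeta}\right|=|\zeta-w|\quad(|\zeta|=1).
\]
Hence $d_K((w,w),E)\asymp\operatorname{dist}(w,E^1)+(1-|w|)$. I denote this quantity by $\delta(w)$; note that it is comparable to $\rho_E(w)$ for $w\in\mathbb D$. Also $R(h_E)(w,w)=w\,\widetilde h'(w)$. Proposition~\ref{prop:ball-construction-E} therefore gives
\[
|\widetilde h(w)|\asymp\delta(w),\qquad |\widetilde h'(w)|\lesssim 1,\qquad |\widetilde h''(w)|\lesssim\delta(w)^{-1}
\]
away from $0$, and the estimates hold trivially near $0$.

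\textbf{Step 2: a logarithm with controlled argument.} Since $\widetilde h$ is nonvanishing on the simply connected domain $\mathbb D$, a holomorphic logarithm exists. The key point is that the argument of $\widetilde h$ must stay bounded, say $|\operatorname{Im}\log\widetilde h|\le M$. To see this, I would use that $H_E$ is the Bruna--Ortega function, which has the form $\exp(\cdot)$ of a Cauchy-type integral, or alternatively that $\operatorname{Re}H_E\gtrsim d_K$ in their construction. Failing that, I would replace $h_E$ by the product built from their outer function, whose argument is bounded by construction. I expect this to be the main obstacle, since boundedness of the argument is not literally part of the statement of Proposition~\ref{prop:ball-construction-E}.

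Granting the bound, write $\lambda=|\lambda|e^{i\alpha}$ with $|\alpha|<\theta$. Then
\[
\operatorname{Re}(\lambda\log\widetilde h)=|\lambda|\bigl(\cos\alpha\log|\widetilde h|-\sin\alpha\arg\widetilde h\bigr)\le |\lambda|\cos\theta\,\log|\widetilde h|+M|\lambda|.
\]
Since $|\widetilde h|\asymp\delta\le C\rho_E$ up to constants, this yields
\[
|\widetilde h^\lambda|\lesssim_\lambda\rho_E^{\,c_\Omega|\lambda|}\qquad\text{with }c_\Omega=\cos\theta,
\]
which is the pointwise part of assertion (1).

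\textbf{Step 3: the $D_{1-d}$ bounds.} For the $D_{1-d}$ bounds I would use the one-variable analogue of \eqref{eq: Integral Norm Defn}:
\[
\|u\|^2_{D_{1-d}}\asymp|u(0)|^2+\int_{\mathbb D}|u'|^2(1-|w|)^{d}\,dA .
\]
Here
\[
(\widetilde h^\lambda)'=\lambda\,\widetilde h^{\lambda}\,\frac{\widetilde h'}{\widetilde h},\qquad\text{so}\qquad |(\widetilde h^\lambda)'|\lesssim|\lambda|\,e^{M|\lambda|}\,\delta^{\,c|\lambda|-1}.
\]
Integrating over level sets of $\operatorname{dist}(\cdot,E^1)$ with \eqref{eq:case2-neighborhood-growth} gives
\[
\int_{\mathbb D}\delta^{2c|\lambda|-2}(1-|w|)^d\,dA\lesssim\int_0^1\!\!\int_0^1(x+t)^{2c|\lambda|-2}\,x^d\,t^{-d}\,dx\,dt .
\]
After substituting polar-type variables, this behaves like
\[
\int_0^1 s^{2c|\lambda|-2+d+1-d}\,ds=\int_0^1 s^{2c|\lambda|-1}\,ds<\infty .
\]
This proves (1), with a bound locally uniform on compact subsets of $\Omega$.

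\textbf{Step 4: holomorphy and the limit $\lambda\to0$.} Holomorphy (2) then follows because the difference quotients $(\widetilde h^{\lambda'}-\widetilde h^{\lambda})/(\lambda'-\lambda)$ are dominated in the weighted $L^2$ norm of the derivative by the same integrand times $|\log\delta|$, which is still integrable; dominated convergence finishes the argument.

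For (3), the same estimate with constant $|\lambda|^2$ gives
\[
\|\widetilde h^\lambda-1\|^2_{D_{1-d}}\lesssim|\widetilde h^\lambda(0)-1|^2+|\lambda|^2\int_0^1 s^{2c|\lambda|-1}\,ds\asymp|\widetilde h^\lambda(0)-1|^2+\frac{|\lambda|}{2c}\longrightarrow0 .
\]
The cancellation of $|\lambda|^2$ against the $1/|\lambda|$ blow-up is exactly the mechanism of \cite[Lemma~9.3.6]{ElFallah2014_book}.
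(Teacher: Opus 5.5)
Your Steps 1, 3 and 4 essentially reproduce the paper's argument: you restrict the Bruna--Ortega estimates to the diagonal, bound $\int_{\mathbb D}\rho_E^{q-2}(1-|w|)^d\,dA\lesssim 1/q$ using the growth of $|E^1_t|$, and let $|\lambda|^2$ cancel against $1/|\lambda|$ for assertion 3. But all of these bounds depend on Step 2, and Step 2 is not proved. The bound $|\operatorname{Im}\log\widetilde h|\le M$ is only asserted, and none of the routes you list is an argument. Writing $H_E$ as the exponential of a Cauchy-type integral says nothing about the imaginary part of the exponent. The inequality $\operatorname{Re}H_E\gtrsim d_K$ is not part of Proposition~\ref{prop:ball-construction-E}. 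Replacing $h_E$ by another function changes the lemma. Nor can the size estimates $|\widetilde h|\asymp\rho_E$, $|\widetilde h'|\lesssim1$ give it on their own: $f(w)=(1-w)^{1+i\varepsilon}$ satisfies $|f|\asymp|1-w|$ and $|f'|\asymp1$ on $\mathbb D$, yet $\operatorname{Im}\log f=\arg(1-w)+\varepsilon\log|1-w|$ is unbounded near $w=1$. So as written, assertion 1 is not established, and with it neither 2 nor 3.

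The missing idea, which is how the paper proceeds, is that logarithmic control of the argument suffices, and it follows directly from your Step 1. You have $|(\log\widetilde h)'|=|\widetilde h'/\widetilde h|\lesssim1/\rho_E$, and $\rho_E(rw)\asymp(1-r)+\rho_E(w)$ for $\tfrac12<r<1$. Integrating along the radius then gives
\[
|\log\widetilde h(w)|\lesssim 1+\int_{1/2}^1\frac{dr}{(1-r)+\rho_E(w)}\lesssim\log\frac{e}{\rho_E(w)},
\]
so $|\operatorname{Im}\log\widetilde h|\le C\log(e/\rho_E)$. Now take $\theta$ so small that $c_\Omega:=\cos\theta-C\sin\theta>0$. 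For $\lambda\in\Omega$ you get $\operatorname{Re}\lambda-C|\operatorname{Im}\lambda|\ge c_\Omega|\lambda|$, hence $\operatorname{Re}(\lambda\log\widetilde h)\le(\operatorname{Re}\lambda-C|\operatorname{Im}\lambda|)\log\rho_E+O(|\lambda|)$. This yields $|\widetilde h^\lambda|\lesssim_\lambda\rho_E^{c_\Omega|\lambda|}$, with constants uniform for $|\lambda|\le1$. It is also why the lemma only claims a sector of some small opening, whereas your $c_\Omega=\cos\theta$ would allow any $\theta<\pi/2$. With this replacement your Steps 3 and 4 go through: $e^{M|\lambda|}$ becomes $e^{C'|\lambda|}$, and the factor $|\log\widetilde h|\lesssim\log(e/\rho_E)$ in your difference-quotient domination is still harmless. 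Your dominated-convergence proof of holomorphy is a valid alternative to the paper's route via norm continuity and the vector-valued Morera theorem.

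Alternatively, your bounded-argument claim can be proved, but from the $A^1$ regularity in Proposition~\ref{prop:ball-construction-E} rather than from the size estimates. Since $\widetilde h\in C^1(\overline{\mathbb D})$ and $|\widetilde h(r\zeta)|\asymp 1-r$ for $\zeta\in E^1$, you get $\widetilde h'\neq0$ on $E^1$. So near each $\zeta_0\in E^1$ you can write $\widetilde h(w)=(w-\zeta_0)A(w)$ with $A$ uniformly close to $\widetilde h'(\zeta_0)$, and a compactness argument then bounds $\arg\widetilde h$ on $\mathbb D$.
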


\begin{proof}
By \cite[proof of Theorem~4.3, p.~551]{Bruna1986}, the function used
in the construction of $h_E$ is the exponential of a holomorphic
function. Its restriction to the diagonal provides the required
logarithm of $\widetilde h$.

Since
$
    d_K\bigl((w,w),(\zeta,\zeta)\bigr)
    =|1-w\overline\zeta|,
$
the estimates for $h_E$ and $Rh_E$ in
Proposition~\ref{prop:ball-construction-E} give
\[
    |\widetilde h(w)|\asymp\rho_E(w),
    \qquad
    |\widetilde h'(w)|\lesssim1.
\]
Moreover, for $w \in \D$ and $\frac{1}{2}<r< 1$
\begin{equation}
    \rho_E(r w)
    \asymp
    (1-r)+\rho_E(w).
    \label{eq:rho-comparison}
\end{equation}
It follows that
\[
    \left|(\log\widetilde h)'(w)\right|
    =\left|\frac{\widetilde h'(w)}{\widetilde h(w)}\right|
    \lesssim\frac1{\rho_E(w)}.
\]
Therefore,
\[
    |\log\widetilde h(w)|
    \lesssim \left| \int_0^1 \frac{1}{\rho_E(rw) } \,dr \right| \asymp 1 + \left| \int_{\frac{1}{2}}^1 \frac{1}{(1-r)+\rho_E(w)} \,dr \right| \lesssim \log\frac{e}{\rho_E(w)}.
\]

Fix $C>0$ such that
\[
    |\operatorname{Im}\log\widetilde h(w)|
    \leq C\log\frac{e}{\rho_E(w)},
    \qquad w\in\mathbb D,
\]
and choose $\theta>0$ such that
$
    c_\Omega:=\cos\theta-C\sin\theta>0.
$
For $\lambda=|\lambda|e^{i\varphi}\in\Omega$,
$$
    \operatorname{Re}\lambda-C|\operatorname{Im}\lambda|
    \geq c_\Omega|\lambda|.
$$
Consequently,
\begin{equation}
    |\widetilde h^\lambda(w)|
    \lesssim_\lambda
    \rho_E(w)^{c_\Omega|\lambda|},
    \qquad
    |(\widetilde h^\lambda)'(w)|
    \lesssim_\lambda
    |\lambda|\rho_E(w)^{c_\Omega|\lambda|-1}.
    \label{eq:h-lambda-estimates}
\end{equation}
The implicit constants may be chosen uniformly for
$\lambda\in\Omega$ with $|\lambda|\leq1$.

We first prove assertion~1. Recall that
\begin{equation}
    \|u\|_{D_{1-d}(\mathbb D)}^2
    \asymp
    |u(0)|^2+
    \int_{\mathbb D}|u'(w)|^2(1-|w|)^d\,dA(w).
    \label{eq:one-variable-equivalent-norm}
\end{equation}
Set $q:=\min\{2c_\Omega|\lambda|,1\}$. By
\cite[Exercise~9.4.1]{ElFallah2014_book},
\eqref{eq:rho-comparison}, and
\eqref{eq:case2-neighborhood-growth}, we obtain, with $x=1-r$,
\begin{equation}
\begin{aligned}
    \int_{\mathbb D}
    \rho_E(w)^{q-2}(1-|w|)^d\,dA(w)
    &\lesssim
    1+\int_0^1\int_0^1
    \frac{x^dt^{1-d}}{(x+t)^{3-q}}\,dx\,dt\\
    &\lesssim
    1+\int_0^1t^{q-1}\,dt
    \lesssim\frac1q.
\end{aligned}
\label{eq:rho-power-integrability}
\end{equation}
Combining \eqref{eq:h-lambda-estimates},
\eqref{eq:one-variable-equivalent-norm}, and
\eqref{eq:rho-power-integrability}, we obtain
\[
\begin{aligned}
    \|\widetilde h^\lambda\|_{D_{1-d}(\mathbb D)}^2
    &\lesssim
    |\widetilde h^\lambda(0)|^2
    +|\lambda|^2
    \int_{\mathbb D}
    \rho_E(w)^{q-2}(1-|w|)^d\,dA(w)\\
    &\lesssim_\lambda
    1+\frac{|\lambda|^2}{q}
    <\infty.
\end{aligned}
\]

We next prove assertion~2. Fix $\lambda_0\in\Omega$ and a disk
$U\Subset\Omega$ centered at $\lambda_0$. The computation in
assertion~1 gives
$
    \sup_{\lambda\in U}
    \|\widetilde h^\lambda\|_{D_{1-d}(\mathbb D)}
    <\infty.
$

Choose $0<q<\min\{c_\Omega|\lambda_0|,1\}$. For $\lambda$ sufficiently
close to $\lambda_0$, \eqref{eq:h-lambda-estimates} gives
\[
\begin{aligned}
    |(\widetilde h^\lambda)'(w)
      -(\widetilde h^{\lambda_0})'(w)|^2
    &\lesssim
    \rho_E(w)^{2c_\Omega|\lambda|-2}
    +\rho_E(w)^{2c_\Omega|\lambda_0|-2}\\
    &\lesssim \rho_E(w)^{q-2}.
\end{aligned}
\]
By \eqref{eq:rho-power-integrability},
\[
    \int_{\mathbb D}
        \rho_E(w)^{q-2}(1-|w|)^d\,dA(w)<\infty.
\]
Moreover, for every $w\in\mathbb D$, we have
\begin{equation*}
    (\widetilde h^\lambda)'(w)
    \longrightarrow
    (\widetilde h^{\lambda_0})'(w)
    \qquad\text{and}\qquad
    \widetilde h^\lambda(0)
    \longrightarrow
    \widetilde h^{\lambda_0}(0).
\end{equation*}
Hence, dominated convergence and
\eqref{eq:one-variable-equivalent-norm} give
\[
    \|\widetilde h^\lambda-\widetilde h^{\lambda_0}\|
        _{D_{1-d}(\mathbb D)}
    \longrightarrow0.
\]
Since $\lambda\mapsto\widetilde h(w)^\lambda$ is holomorphic for
every $w\in\mathbb D$, the vector-valued Morera theorem
\cite[Chapter~3, Exercise~26(b), p.~89]{Rudin1991}
proves assertion~2.

Finally, for $\lambda$ sufficiently close to zero,
\eqref{eq:h-lambda-estimates},
\eqref{eq:one-variable-equivalent-norm}, and
\eqref{eq:rho-power-integrability} give
\[
\begin{aligned}
    \|\widetilde h^\lambda-1\|_{D_{1-d}(\mathbb D)}^2
    &\lesssim
    |\widetilde h^\lambda(0)-1|^2
    +\frac{|\lambda|^2}{c_\Omega|\lambda|}\\
    &=
    |\widetilde h^\lambda(0)-1|^2
    +\frac{|\lambda|}{c_\Omega}.
\end{aligned}
\]
Therefore,
$
    \lim_{\Omega\ni\lambda\to0}
    \|\widetilde h^\lambda-1\|_{D_{1-d}(\mathbb D)}
    =0,
$
which proves assertion~3.
\end{proof}

\begin{lem}\label{lem:lifted-analytic-deformation}
Let $\beta_c$, $d$, $E$, $\widetilde h$, $\rho_E$, $\Omega$, and
$c_\Omega$ be as in Lemma~\ref{lem:one-variable-deformation}, and let
$\mathcal T$ be the lifting operator from
Lemma~\ref{lem:diagonal-lifting}. For $\lambda\in\Omega$, define
$
    g_\lambda:=\mathcal T(\widetilde h^\lambda).
$
Then the following assertions hold.
\begin{enumerate}
    \item The map
    $
        \lambda\longmapsto g_\lambda
    $
    is holomorphic from $\Omega$ into
    $\mathcal D_{\beta_c}(\mathbb D^2)$ and
    $
        g_\lambda\longrightarrow1
        \quad\text{in }\mathcal D_{\beta_c}(\mathbb D^2)
    $
    as $\lambda\to0$ within $\Omega$.

    \item For every $\lambda\in\Omega$,
    \[
        |g_\lambda(z)|
        \lesssim_\lambda
        \operatorname{dist}(z,E)^{c_\Omega|\lambda|}
    \]
    for $z$ sufficiently close to $E$.

    \item There exists $L_\Omega>0$ such that
    $
        g_\lambda\in\mathcal M(\mathcal D_{\beta_c}(\mathbb D^2))
    $
    whenever $\lambda\in\Omega$ and $|\lambda|\geq L_\Omega$.
\end{enumerate}
\end{lem}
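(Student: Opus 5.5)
Assertion 1 is a direct composition. By Lemma~\ref{lem:diagonal-lifting} with $\beta=\beta_c$, the operator $\mathcal T$ is a linear isometry from $D_{\beta_c-1}(\mathbb D)=D_{1-d}(\mathbb D)$ into $\mathcal D_{\beta_c}(\mathbb D^2)$. Composing the holomorphic map $\lambda\mapsto\widetilde h^\lambda$ of Lemma~\ref{lem:one-variable-deformation}(2) with this bounded linear map gives a holomorphic map $\Omega\to\mathcal D_{\beta_c}(\mathbb D^2)$. Since $\mathcal T1=1$, we get
\[
\|g_\lambda-1\|_{\beta_c}=\|\widetilde h^\lambda-1\|_{D_{1-d}(\mathbb D)},
\]
and this tends to $0$ by Lemma~\ref{lem:one-variable-deformation}(3).

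For assertion 2, I will use the pointwise bound $|\widetilde h^\lambda(w)|\lesssim_\lambda\rho_E(w)^{c|\lambda|}$, where $c=c_\Omega$, written along the segment $w_t=tz_1+(1-t)z_2$. It then suffices to show
\[
\int_0^1\rho_E(w_t)^{c|\lambda|}\,dt\lesssim \operatorname{dist}(z,E)^{c|\lambda|}.
\]
Because $\rho_E$ is the distance to $E^1$, which is $1$-Lipschitz, for any $(\zeta,\zeta)\in E$ we have
\[
\rho_E(w_t)\le |w_t-\zeta|\le t|z_1-\zeta|+(1-t)|z_2-\zeta|\le \sqrt2\,|z-(\zeta,\zeta)|.
\]
Taking the infimum over $\zeta$ gives $\rho_E(w_t)\lesssim\operatorname{dist}(z,E)$ uniformly in $t$, and integrating yields the claim. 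This step is routine.

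Assertion 3 is the main obstacle. I plan to apply Proposition~\ref{prop:ball-restrictions-multipliers}, so I need $g_\lambda\in H^\infty$ and $Rg_\lambda\in H^\infty$ once $|\lambda|$ is large. Boundedness of $g_\lambda$ follows from Lemma~\ref{lem:diagonal-lifting} and the bound on $\widetilde h^\lambda$. For the radial derivative, differentiate under the integral:
\[
Rg_\lambda(z)=\int_0^1 w_t\,(\widetilde h^\lambda)'(w_t)\,dt.
\]
By \eqref{eq:h-lambda-estimates}, $|(\widetilde h^\lambda)'(w)|\lesssim_\lambda|\lambda|\,\rho_E(w)^{c|\lambda|-1}$. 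Choosing $L_\Omega$ with $c_\Omega L_\Omega\ge1$, the exponent is nonnegative whenever $|\lambda|\ge L_\Omega$. Since $\rho_E\le 2$ on $\mathbb D$, the derivative is then uniformly bounded, so $Rg_\lambda\in H^\infty(\mathbb D^2)$.

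Two checks remain, and they are where I expect the real difficulty. The estimate \eqref{eq:h-lambda-estimates} must hold with a constant that is finite for each fixed large $\lambda$; its dependence on $|\lambda|$, through $e^{C|\operatorname{Im}\lambda|}$-type factors, is harmless because $\lambda$ is fixed. One must also make sure $\lambda$ stays in $\Omega$. If the sup bound for the derivative fails near points where $\rho_E$ is small, I would instead bound $R g_\lambda$ via $\rho_E^{c|\lambda|-1}\le 2^{c|\lambda|-1}$, which needs only $c|\lambda|\ge1$. With both $g_\lambda$ and $Rg_\lambda$ in $H^\infty$, Proposition~\ref{prop:ball-restrictions-multipliers} gives $g_\lambda\in\mathcal M(\mathcal D_{\beta_c}(\mathbb D^2))$.
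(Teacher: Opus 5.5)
Your proposal is correct and follows the paper's proof essentially step for step. Assertion 1 comes from composing $\lambda\mapsto\widetilde h^\lambda$ with the isometry $\mathcal T$ (using $\mathcal T1=1$), assertion 2 from $\rho_E(w_t)\le t|z_1-\zeta|+(1-t)|z_2-\zeta|\lesssim\operatorname{dist}(z,E)$, and assertion 3 from $L_\Omega=c_\Omega^{-1}$, differentiating under the integral, and Proposition~\ref{prop:ball-restrictions-multipliers}. The hedging in your last paragraph is unnecessary: once $c_\Omega|\lambda|\ge1$, the bound in \eqref{eq:h-lambda-estimates} is already uniform on $\mathbb D$ because $\rho_E\le2$, and such $\lambda$ exist because $\Omega$ is an unbounded sector.
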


\begin{proof}
Since $\beta_c-1=1-d$, assertion~1 is an immediate consequence of
Lemmas~\ref{lem:diagonal-lifting} and
\ref{lem:one-variable-deformation}. In particular,
\[
    \|g_\lambda-1\|_{\beta_c}
    =
    \|\widetilde h^\lambda-1\|_{D_{1-d}(\mathbb D)}
    \longrightarrow0
\]
as $\lambda\to0$ within $\Omega$.

To prove assertion~2, let $\lambda\in\Omega$ and choose
$(\zeta,\zeta)\in E$ such that
$
    |z-(\zeta,\zeta)|
    \leq2\operatorname{dist}(z,E).
$
For $w_t:=tz_1+(1-t)z_2$, $0\leq t\leq1$, we have
\[
\begin{aligned}
    \rho_E(w_t)
    &\leq |w_t-\zeta|\\
    &\leq t|z_1-\zeta|+(1-t)|z_2-\zeta|\\
    &\lesssim\operatorname{dist}(z,E).
\end{aligned}
\]
Thus, by \eqref{eq:h-lambda-estimates},
\[
\begin{aligned}
    |g_\lambda(z)|
    &\leq
    \int_0^1|\widetilde h^\lambda(w_t)|\,dt\\
    &\lesssim_\lambda
    \operatorname{dist}(z,E)^{c_\Omega|\lambda|}.
\end{aligned}
\]

Finally, take $L_\Omega:=c_\Omega^{-1}$ and let
$\lambda\in\Omega$ satisfy $|\lambda|\geq L_\Omega$. Then \eqref{eq:h-lambda-estimates} gives
$
    \widetilde h^\lambda,
    (\widetilde h^\lambda)'
    \in H^\infty(\mathbb D).
$
Differentiating under the integral defining $g_\lambda$, we obtain
\[
    Rg_\lambda(z)
    =
    \int_0^1
    w_t(\widetilde h^\lambda)'(w_t)\,dt.
\]
Since $|w_t|\leq1$, it follows that
$
    g_\lambda,Rg_\lambda\in H^\infty(\mathbb D^2).
$
Proposition~\ref{prop:ball-restrictions-multipliers} now proves
assertion~3.
\end{proof}

\begin{prop}\label{prop:case2-cyclicity}
Let $d=2-\beta_c\in[0,1)$, let $E\subset\Delta$ be the
$\mathcal K$-set chosen above, and let $h_E$ be the function given by
Proposition~\ref{prop:ball-construction-E}. Then $h_E$ is cyclic in
$\mathcal D_\beta(\mathbb D^2)$ whenever
$
    \beta\leq\beta_c.
$
\end{prop}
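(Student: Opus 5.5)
The plan is to show $1\in[h_E]$ in $\mathcal D_{\beta_c}(\mathbb D^2)$ by a duality and analytic-continuation argument built on the family $g_\lambda=\mathcal T(\widetilde h^\lambda)$ of Lemma~\ref{lem:lifted-analytic-deformation}; Remark~\ref{rem:monotonicity} then gives cyclicity for every $\beta\leq\beta_c$. Fix a bounded linear functional $\ell$ on $\mathcal D_{\beta_c}(\mathbb D^2)$ annihilating $[h_E]$, and set $F(\lambda):=\ell(g_\lambda)$. By assertion~1 of Lemma~\ref{lem:lifted-analytic-deformation}, $F$ is holomorphic on the connected sector $\Omega$ and $F(\lambda)\to\ell(1)$ as $\lambda\to0$ in $\Omega$. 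It therefore suffices to show that $F$ vanishes for all $\lambda\in\Omega$ with $|\lambda|$ large. By the identity theorem this forces $F\equiv0$, hence $\ell(1)=0$, and Hahn--Banach gives $1\in[h_E]$.

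To get $F(\lambda)=0$ for large $|\lambda|$, I will show that $g_\lambda\in[h_E]$ by writing $g_\lambda=h_E\cdot(g_\lambda/h_E)$ and proving that $g_\lambda/h_E$ is a multiplier. The first step is a lower bound for $h_E$. On $\overline{\mathbb D^2}\subset\overline{\mathbb B_2(0,\sqrt2)}$ we have $|h_E(z)|\asymp d_K(z,E)$, and the standard Korányi comparison $d_K(z,\zeta)\gtrsim|z-\zeta|^2$ yields $|h_E(z)|\gtrsim\operatorname{dist}(z,E)^2$. Away from a neighbourhood of $E$, $|h_E|$ is bounded below, so there everything is bounded. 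Next, near $E$, assertion~2 of Lemma~\ref{lem:lifted-analytic-deformation} gives $|g_\lambda|\lesssim\operatorname{dist}(z,E)^{c_\Omega|\lambda|}$. Differentiating under the integral as in the proof of assertion~3, and using \eqref{eq:h-lambda-estimates} together with $\rho_E(w_t)\lesssim\operatorname{dist}(z,E)$, I expect
\[
|Rg_\lambda(z)|\lesssim_\lambda \operatorname{dist}(z,E)^{c_\Omega|\lambda|-1}.
\]
Combined with $|Rh_E|\lesssim1$ from Proposition~\ref{prop:ball-construction-E}, the quotient rule
\[
R\!\left(\frac{g_\lambda}{h_E}\right)=\frac{Rg_\lambda}{h_E}-\frac{g_\lambda\,Rh_E}{h_E^2}
\]
is bounded by $\operatorname{dist}(z,E)^{c_\Omega|\lambda|-3}+\operatorname{dist}(z,E)^{c_\Omega|\lambda|-4}$. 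Hence $g_\lambda/h_E$ and its radial derivative lie in $H^\infty(\mathbb D^2)$ as soon as $c_\Omega|\lambda|\geq4$. Proposition~\ref{prop:ball-restrictions-multipliers} then shows that $g_\lambda/h_E\in\mathcal M(\mathcal D_{\beta_c}(\mathbb D^2))$.

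The remaining step is to conclude $g_\lambda=h_E\cdot(g_\lambda/h_E)\in[h_E]$, which is the same fact used at the end of Lemma~\ref{Perfekt}: a multiplier times $\psi$ lies in $[\psi]$. If I prefer to avoid relying on it directly, the fallback is to apply Lemma~\ref{Perfekt} with $\phi=g_\lambda$ and $\psi=h_E$, both multipliers for large $|\lambda|$ (assertion~3 and Proposition~\ref{prop:ball-restrictions-multipliers}), with $\phi/\psi\in H^\infty$ by the estimate above. This gives $g_\lambda^2\in[h_E]$. I would then run the same analytic-continuation argument with $\lambda\mapsto g_\lambda^2$, which is holomorphic into $\mathcal D_{\beta_c}$ on a region where $g_\lambda$ is a multiplier. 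The snag is that this fallback does not reach $\lambda\to0$, so the direct multiplier route is the one I will pursue.

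The main obstacle I expect is the boundary estimate $|Rg_\lambda|\lesssim\operatorname{dist}(z,E)^{c_\Omega|\lambda|-1}$ near $E$, together with the comparison $d_K\gtrsim\operatorname{dist}^2$ on $\overline{\mathbb D^2}$ and the uniformity of the constants. Once these power losses are controlled, taking $|\lambda|$ large absorbs them and the rest of the argument is soft.
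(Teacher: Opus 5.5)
Your argument is correct. It shares the paper's skeleton: the family $g_\lambda=\mathcal T(\widetilde h^\lambda)$, membership in $[h_E]$ for large $|\lambda|$, the identity theorem on the connected sector $\Omega$, and the limit $\lambda\to0$. Your annihilating functional $\ell$ plays the role of the paper's quotient map $Q$.

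The key step is different. The paper uses only the size comparison $|g_\lambda|\lesssim\operatorname{dist}(z,E)^2\lesssim|h_E|$ and the fact that $g_\lambda$ and $h_E$ are multipliers, and invokes Lemma~\ref{Perfekt} to get $g_\lambda^2\in[h_E]$. It then has to continue $\lambda\mapsto g_\lambda^2$ rather than $g_\lambda$, and only sketches why that map is holomorphic on $\Omega$ and tends to $1$.

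You instead show that $g_\lambda/h_E$ is itself a multiplier. You use the extra bounds $|Rg_\lambda|\lesssim_\lambda\operatorname{dist}(z,E)^{c_\Omega|\lambda|-1}$ (immediate from the formula for $Rg_\lambda$, \eqref{eq:h-lambda-estimates}, and $\rho_E(w_t)\lesssim\operatorname{dist}(z,E)$) and $|Rh_E|\lesssim1$. Taking $c_\Omega|\lambda|\geq4$ absorbs the losses in the quotient rule, so Proposition~\ref{prop:ball-restrictions-multipliers} applies. This bypasses Lemma~\ref{Perfekt} and lets you continue $g_\lambda$ itself, whose holomorphy and convergence to $1$ are exactly assertion~1 of Lemma~\ref{lem:lifted-analytic-deformation}. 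The price is derivative information on the quotient, which the paper's route does not need and which is available here thanks to the explicit lifting formula.

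Both routes rest on the same fact: $\varphi\psi\in[\psi]$ for a multiplier $\varphi$. For your $\varphi=g_\lambda/h_E$ it is easy to justify:
\begin{itemize}
\item the proof of Proposition~\ref{prop:ball-restrictions-multipliers} gives $\|\varphi\|_{\mathcal M}\lesssim\|\varphi\|_\infty+\|R\varphi\|_\infty$, so the dilations $\varphi_r$ have uniformly bounded multiplier norms;
\item the Taylor polynomials of $\varphi_r$ converge in $C^1(\overline{\mathbb D^2})$, hence $\varphi_rh_E\in[h_E]$;
\item $\varphi_rh_E\to\varphi h_E$ weakly, and $[h_E]$ is weakly closed.
\end{itemize}

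Finally, your worry that the fallback cannot reach $\lambda\to0$ is unfounded; the fallback is precisely the paper's proof. For every $\lambda\in\Omega$ we have $g_\lambda\in H^\infty(\mathbb D^2)$, uniformly for $|\lambda|\leq1$. Moreover, $H^\infty\cap\mathcal D_\beta(\mathbb D^2)$ is an algebra for $\beta\leq2$, by $R(uv)=uRv+vRu$ and the fact that $H^\infty$ multiplies $\mathcal D_{\beta-2}(\mathbb D^2)$. Hence $\lambda\mapsto g_\lambda^2$ is holomorphic on all of $\Omega$ and $g_\lambda^2\to1$.
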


\begin{proof}
It is enough to prove cyclicity in $\mathcal D_{\beta_c}(\mathbb D^2)$. Let
$g_\lambda$, $\Omega$, and $c_\Omega$ be as in
Lemma~\ref{lem:lifted-analytic-deformation}. By the standard comparison
between the Euclidean and Korányi distances,
$
    \operatorname{dist}(z,E)^2\lesssim d_K(z,E);
$
see \cite[p.~540]{Bruna1986}.

Let $\lambda\geq2c_\Omega^{-1}$ be real. By
Lemma~\ref{lem:lifted-analytic-deformation},
$
    g_\lambda\in\mathcal M(\mathcal D_{\beta_c}(\mathbb D^2))
$
and
\[
\begin{aligned}
    |g_\lambda(z)|
    &\lesssim_\lambda
    \operatorname{dist}(z,E)^{c_\Omega\lambda}\\
    &\lesssim
    \operatorname{dist}(z,E)^2
    \lesssim d_K(z,E)
    \asymp |h_E(z)|.
\end{aligned}
\]
Hence
$
    \frac{g_\lambda}{h_E}\in H^\infty(\mathbb D^2).
$
Since $g_\lambda$ and $h_E$ are multipliers,
Lemma~\ref{Perfekt} gives
$
    g_\lambda^2\in[h_E]$ for
    $ \lambda\geq2c_\Omega^{-1}.
$

As in Lemma~\ref{lem:lifted-analytic-deformation}, one can verify that $\lambda\longmapsto g_\lambda^2$ is holomorphic from $\Omega$ into $\mathcal D_{\beta_c}(\mathbb D^2)$. Let
$Q$ be the quotient map onto
$\mathcal D_{\beta_c}(\mathbb D^2)/[h_E]$. The holomorphic map
$
    \lambda\longmapsto Q(g_\lambda^2)
$
vanishes for every $\lambda$ such that $|\lambda|\geq2c_\Omega^{-1}$. Since
Banach-valued holomorphic functions are weakly holomorphic
\cite[Definition~3.30, p.~82]{Rudin1991}, the identity theorem
shows that this map vanishes identically on $\Omega$. Therefore,
\[
    g_\lambda^2\in[h_E],
    \qquad \lambda\in\Omega.
\]

Finally, arguing as in Lemma~\ref{lem:lifted-analytic-deformation}, we obtain
\[
    \lim_{\Omega\ni\lambda\to0}
    \|g_\lambda^2-1\|_{\beta_c}
    =0.
\]
Since $[h_E]$ is closed, we conclude that $1\in[h_E]$. Thus $h_E$ is cyclic in $\mathcal D_{\beta_c}(\mathbb D^2)$, and
Remark~\ref{rem:monotonicity} gives cyclicity in
$\mathcal D_\beta(\mathbb D^2)$ for every $\beta<\beta_c$.
\end{proof}

Propositions~\ref{prop:case1-noncyclicity},
\ref{prop:case1-cyclicity}, \ref{prop:case2-noncyclicity}, and
\ref{prop:case2-cyclicity} complete the proof of
Theorem~\ref{MainThm1}.

\section{Financial Support} The first named author was financially supported by the National
Science Center, Poland, under SHENG III research project 2023/48/Q/ST1/00048. Moreover, the first named author acknowledges financial support for his research stay at the University of South Florida from the program Excellence
Initiative at Jagiellonian University in Krak\'ow, Research Support Module 2026. The second named author also acknowledges the Institut de Mathématiques de Toulouse and the Department of Mathematics at Jagiellonian University for providing a stimulating research environment.

\section{Acknowledgments} We would like to thank Pascal Thomas, \L{}ukasz Kosi\'nski, Alan Sola, Dimitrios Vavitsas and Konstantinos Maronikolakis for reading our paper and providing us with detailed comments and suggestions that further enhanced the quality of the paper.

\section{AI disclosure}

Lemma~\ref{lem:diagonal-lifting}, in the form presented here, was suggested to us by ChatGPT. Although its proof is elementary, the lemma further simplified the proof of our main result in Case 2 ($\beta_c\in(1,2]$). We also used ChatGPT for proofreading and assistance with typesetting.

\bibliographystyle{plain} 

\end{document}